\newcommand{\mc}[1]{\mathcal{#1}}
\newcommand{\bs}[1]{\boldsymbol{#1}}
\newcommand\N{\mathbb N}
\newcommand\sF{\mathcal F}
\newcommand\sS{\mathcal S}
\newcommand\sG{\mathcal G}
\newcommand\sC{\mathcal C}
\newcommand\sL{\mathcal L}
\newcommand\q{\quad}
\newcommand\qq{\qquad}
\newcommand\es{\varnothing}
\newcommand\lam{\lambda}
\newcommand\La{\Lambda}
\newcommand\bmu{{\bm{\mu}}}
\newcommand\bal{{\bm{\alpha}}}
\newcommand\al{\alpha}
\newcommand\eps{\epsilon}
\newcommand\gcev[1]{{\overset{{}_\shortleftarrow}{#1}}}
\newcommand\gvec[1]{{\overset{{}_\shortrightarrow}{#1}}}
\long\def\blue#1{{\color{blue}{#1}}}
\renewcommand{\P}{\mathbb{P}}
\newcommand{\E}{\mathbb{E}}
\newcommand\FF{\mathcal{F}} %\mathbb{F}}
\newcommand{\Q}{\mathbb{Q}}
\newcommand\sP{\mathcal{P}}
\newcommand{\blank}[1]{}
\newcommand{\vep}{\eps} %{\varepsilon}
\newcounter{thmcounter}
\newtheorem*{theorem*}{Theorem}
\newtheorem{theorem}[thmcounter]{Theorem}
\newtheorem{lemma}[thmcounter]{Lemma}
\newtheorem{proposition}[thmcounter]{Proposition}
\newtheorem{question}[thmcounter]{Question}
\newtheorem{open}[thmcounter]{Open problem}
\theoremstyle{remark}
\newtheorem{definition}[thmcounter]{Definition}
\newtheorem{example}[thmcounter]{Example}
\newtheorem{remark}[thmcounter]{Remark}
\newcommand\ol{\overline}
\newcommand\oo{\infty}
\newcommand\supp{\text{\rm supp}}
\def\resp{respectively}
\newcommand\hlf{\frac12}
\newcommand\Leb{\Q} %{\text{\rm Leb}}
\newcommand\kmax{k_{\text{\rm max}}}
\renewcommand\o{\text{\rm o}}
\newcounter{mycount1}\newcounter{mycount2}\newcounter{mycount3}\newcounter{mycount}
\newenvironment{romlist}{\begin{list}{\rm(\roman{mycount1})}%
   {\usecounter{mycount1}\labelwidth=1cm\itemsep 0pt}}{\end{list}}
\newenvironment{numlist}{\begin{list}{\rm\arabic{mycount2}.}%
   {\usecounter{mycount2}\labelwidth=1cm\itemsep 0pt}}{\end{list}}
\newenvironment{letlist}{\begin{list}{\rm(\alph{mycount3})}%
   {\usecounter{mycount3}\labelwidth=1cm\itemsep 0pt}}{\end{list}}
\newcommand\sm{\setminus}
\newcommand\what{\widehat}
\newcommand\bpi{{\boldsymbol\pi}}
\numberwithin{equation}{section}
\numberwithin{thmcounter}{section}
\numberwithin{figure}{section}
\newcommand\oI{\ol I}
\newcommand{\END}{\hfill $\blacktriangleleft$}
\newcommand\mind{\mu_{\text{\rm ind}}}
\newcommand\sFp{\mc{F}^{\text{\rm perm}}}
\title{Coalescence in Markov chains}
\author{Geoffrey R.\ Grimmett and Mark Holmes}
\address{(GRG) Statistical Laboratory, Centre for
Mathematical Sciences, Cambridge University, Wilberforce Road,
Cambridge CB3 0WB, UK} 
\email{grg@statslab.cam.ac.uk}
\urladdr{\url{http://www.statslab.cam.ac.uk/~grg/}}
\address{(MH) School of Mathematics \&\ Statistics, The University of Melbourne, 
Parkville, VIC 3010, Australia}
\email{holmes.m@unimelb.edu.au}
\urladdr{\url{https://researchers.ms.unimelb.edu.au/~mholmes1@unimelb/}}
\begin{document}

\begin{abstract}
A Markov chain $X^i$ on a finite state space $S$ has transition matrix
$P$ and initial state $i$. We may run the chains $(X^i: i\in S)$
in parallel, while insisting that any two such chains coalesce whenever they are simultaneously at the same state. 
There are $|S|$ trajectories which evolve separately, but not necessarily independently,
prior to coalescence.
What can be said about the number $k(\mu)$ of coalescence classes of the process, and 
what is the set $K(P)$ of such numbers $k(\mu)$, as the coupling $\mu$ of the chains ranges
over  couplings that are consistent with $P$?
We continue earlier work of the authors (\lq Non-coupling from the past', in \emph{In and Out of Equilibrium 3}, 
Springer, 2021) on these two fundamental questions, which have special importance 
for the \lq coupling from the past' algorithm.

We concentrate partly on a family of  couplings termed block measures, which may be viewed as
couplings of lumpable chains with coalescing lumps. Constructions of such couplings
are presented, and also of non-block measure with similar properties.
\end{abstract}

\date{15 October 2025, revised 8 March 2026} %\today}
\keywords{Markov chain, grand coupling, coupling from the past, CFTP,
lumpability, block measure, coalescence time, coalescence number,
avoidance coupling}
\subjclass[2010]{60J10, 60J22}

\dedicatory{In celebration of Svante Janson's 70th birthday}
\maketitle

\section{Introduction}\label{sec:1}

Finite-state space Markov chains form a key topic in probability theory, and 
they are taught in 
many undergraduate courses worldwide. They are considered to be well understood,
and their theory fully established. Applications across science are significant and multifarious.
In this paper we explore some interesting questions concerning coalescence 
that remain unresolved, and indeed in  part unasked.

Throughout this paper, $S$ is a finite, non-empty set (without loss of generality, we may take 
$S=\{1,2,\dots, n\}$), and $P=(p_{i,j}:i,j\in S)$ is an irreducible stochastic matrix.
Let $X=(X_t: t = 0,1,\dots)$ be a Markov chain on 
the state space $S$ with transition matrix $P$. 
The long-term behaviour of $X$ is well known: if $X$ is (irreducible and)  aperiodic, then $P$ has 
a unique invariant distribution $\bpi=(\pi_j: j \in S)$, and $\P(X_t=j) \to \pi_j$
for $j\in S$, as $t\to\oo$. 
For introductions to the theory of Markov chains, the reader is referred to the books
\cite{GS,GW,Norris}.

For $i\in S$, denote by $X^i=(X_t^i: t=0,1,\dots)$ the Markov chain $X=(X_t)$ conditioned on starting in state $i$, that is, conditioned on the event that
$X_0=i$. In the current article we explore the simultaneous evolution of the chains 
$X^i$, $i\in S$, in the setting where any two paths stay together after they meet; 
that is, if $X^i_t=X^j_t$ then $X^i_{t+s}=X^j_{t+s}$ for all $s\in \N :=\{1,2,3,\dots\}$.  
For general $P$, multiple joint distributions of evolutions exist that are `consistent' with $P$ in
the sense that they have the required marginal distributions.  

In this setting, we say that $X^i$ and $X^j$ \emph{coalesce} (or just that $i$ and $j$ coalesce) if they meet 
(that is, if $X^i_t=X^j_t$ for some $t$).
The principal issue investigated here is to determine 
the degree of coalescence of the family $(X^i: i\in S)$ as $t\to\oo$.
This question is made more explicit in Section \ref{sec:cftp}, 
and is connected to earlier work on so-called avoidance coupling;
see Remark \ref{rem:kmax}.

The current work is a development of earlier work of the authors, \cite{ncftp},
directed mainly at coalescence in so-called \lq coupling from the past' (CFTP).
CFTP is an important technique for exact simulation from a given distribution on a state space $S$, and is related to the topic of \lq Monte Carlo Markov chains' (MCMC). Whereas MCMC runs forwards in time, CFTP
runs backwards. The CFTP
algorithm is successful in situations where all the chains $(X^i)$ coalesce. 
We explain this connection in Remark \ref{rem:fb}. 
Some background information from  \cite{ncftp} is included in this work as an aid to the reader. 

Here is a summary of the contents of this article. A \lq grand coupling' is
a coupling of the chains $(X^i: i \in S)$. Such couplings are defined in Section \ref{sec:grand},
where the special \lq independence coupling' is introduced. In Theorem \ref{thm:multiple_consistent} 
we answer the question of when the independence coupling is the unique grand coupling. 
Section \ref{sec:cftp} is devoted to forward coalescence.
Lumpable chains and block measures are introduced in Section \ref{sec:3}, and the
associated main results appear in Sections \ref{sec:exist} and \ref{sec:exist-non}.
A condition for a lumpable chain to give rise to a block measure is presented in Theorem \ref{prop:4};
conversely, the existence of non-block measures is explored in Theorem \ref{thm:6} in the special case of the 
transition matrix with equally probable transitions.  In the final Section \ref{sec:whatF}, we initiate an investigation of the structure
of the set of grand couplings corresponding to transition matrices. 

The general questions approached here seem quite fundamental 
to a complete theory of
finite state-space Markov chains,
and yet they have received only limited attention so far.  This is reflected in the partial
state of knowledge revealed in \cite{ncftp} and the current paper, which concentrate on grand couplings
based on \emph{sequences of independent random functions}.
Thus, this work may be viewed as a contribution to the
theory of iterated random functions. 
The idea of studying a \lq forward' chain by running it `backwards' appeared
in \cite{Letac}; see also \cite{CL, DP}, for example.

There are a number of questions and open problems presented in this paper, including Questions 
\ref{q:0}, \ref{q:1}, \ref{q:2}, \ref{q:4}, and the authors hope that readers will resolve some of these.

We write $\N$ for the set $\{1,2,\dots\}$
of natural numbers, and $\P$ for a 
generic probability measure.

\section{Grand couplings of Markov chains}\label{sec:grand}

We shall consider only \emph{irreducible} Markov chains on a finite state space $S$.
Let $\FF_S$ be the set of functions from $S$ to $S$, and
let $\sP_S$ be the set of irreducible  stochastic matrices on $S$. 

\begin{definition}[\cite{ncftp}]\label{def:consistent}
A probability measure $\mu$ on $\FF_S$ is \emph{consistent} with $P\in\sP_S$,
in which case we say that the pair $(P,\mu)$ is \emph{consistent},  if
\begin{equation}\label{eq:4}
p_{i,j} = \mu\bigl(\{f\in\FF_S: f(i)=j\}\bigr), \qq  i,j\in S.
\end{equation}
Let $\sL_P$ denote the set of probability measures $\mu$ on $\FF_S$ 
that are consistent with $P\in\sP_S$.  \END
\end{definition}

Let $P\in\sP_S$.
Any probability measure $\mu\in\sL_P$ may be used as the basis of a coupling of the
chains $(X^i: i\in S)$, as we now demonstrate. 
Let $F_1,F_2,\dots$ be random functions that are independent  and distributed
on $\sF_S$ according to $\mu$, and set 
\begin{equation}\label{eq:fcoupl}
X^i_t=F_t \circ F_{t-1}\circ\cdots\circ F_1(i), \qq i\in S.
\end{equation}
(This is a convenient abuse of notation;  more properly, the right side of \eqref{eq:fcoupl} defines  processes
having the same law as the sequence $(X^i)$.)
For $t\in\{0,1,\dots\}$, let $Z_t=(X^i_t: i \in S)$.  Then $Z=(Z_t:t=0,1,\dots)$ is a 
Markov chain with state space $S^S$, 
started in the state $(1,2,\dots,n)$.
The \lq multichain' $Z$ is in general reducible, since the number of distinct entries in $Z_{t+1}$ 
is less than or equal to that in $Z_t$.  

By the consistency property \eqref{eq:4}, 
the sequence $X^i=(X_t^i: t= 0,1,\dots)$
has the same law as $X$ conditioned on the event that $X_0=i$.  
Moreover, if $X_t^i=X_t^j$ then trivially $X_{t+s}^i=X_{t+s}^j$ 
for all $s\in \N$ (since each $F_t$ is a function);
that is, once the paths from $i$ and $j$ meet, they stay together thenceforth.

A measure $\mu\in\sL_P$ is sometimes called a \emph{grand coupling} of
$P$.
It is elementary that $\mc{L}_P\ne\es$, as indicated in the following example.

\begin{example}[Independence coupling]
\label{exa:indep_grand}
Let $P\in \mc{P}_S$, and let $\mind$ be given by the product form
\[
\mind(\{f\})=\prod_{i\in S}p_{i,f(i)}, \qq f\in \mc{F}_S.
\]
Then 
\begin{align*}
\mind(\{f:f(i)=j\})&=\sum_{f:f(i)=j}\prod_{u \in S}p_{u,f(u)}
=p_{i,j}\sum_{f:f(i)=j}\prod_{u \ne i}p_{u,f(u)}\\
&=p_{i,j}\sum_{(j_1,\dots, j_{i-1},j_{i+1},\dots, j_n)\in S^{n-1}}\prod_{u\ne i}p_{u,j_u}\\
&=p_{i,j}\prod_{u \ne i}\biggl[\sum_{j_u\in S}p_{u,j_u}\biggr]
=p_{i,j}\prod_{u\ne i}1,
\end{align*}
so that $\mind\in \mc{L}_P$.
The measure $\mind$ is called the \emph{independence coupling} as it gives rise to    
$n=|S|$ chains $X^i$ with transition matrix $P$, starting from each $i \in S$ respectively, 
that evolve independently until they meet.  
If $P$ is aperiodic then, by Doeblin's theorem (see Theorem \ref{thm:indep_copies_aperiodic}), 
for every pair $i,j\in S$ the chains $X^i$, $X^j$ meet in finite time. Such chains stick together thenceforth, and thus 
all $n$ chains a.s.\ meet in finite time (in
some common random state).
\END
\end{example}

The next theorem contains a criterion for $\mind$ to
be the unique grand coupling.

\begin{theorem}
\label{thm:multiple_consistent}
For $P\in\sP_S$, we have $|\mc{L}_P|\ge 2$ if and only if $P$ has at least two rows each 
of which contains some entry lying in the open interval $(0,1)$.
\end{theorem}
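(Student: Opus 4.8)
The plan is to first reduce the stated condition to a cleaner form and then treat the two implications separately, viewing a consistent measure $\mu\in\sL_P$ as a coupling of the values $(f(i):i\in S)$ in which $f(i)$ has the law given by the $i$-th row of $P$. First I would observe that, since each row of $P$ sums to $1$, a given row contains an entry in the open interval $(0,1)$ if and only if it has at least two strictly positive entries, i.e.\ if and only if that row is \emph{non-deterministic} (not a point mass). Thus the claim to be proved is exactly: $|\sL_P|\ge 2$ if and only if at least two rows of $P$ are non-deterministic. Under this reformulation, consistency in \eqref{eq:4} simply prescribes the one-dimensional marginals of $\mu$, so $\sL_P$ is the set of all joint laws on $\FF_S$ with these prescribed marginals.

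For the \emph{necessity} (the forward implication) I would prove the contrapositive: if at most one row of $P$ is non-deterministic then $|\sL_P|=1$. Each deterministic row $i$, say with $p_{i,g(i)}=1$, forces $\mu(\{f:f(i)=g(i)\})=1$ for every $\mu\in\sL_P$ by \eqref{eq:4}. If all rows are deterministic this pins $\mu$ down to the point mass $\delta_{g}$. If exactly one row, say row $i_0$, is non-deterministic, then every $\mu\in\sL_P$ is supported on functions $f$ with $f(i)=g(i)$ for all $i\ne i_0$; since all coordinates other than $i_0$ are almost surely constant, the joint law is determined by the single marginal at $i_0$, which \eqref{eq:4} fixes. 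In either case $\sL_P$ is a singleton, so $|\sL_P|\ge 2$ forces at least two non-deterministic rows.

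The \emph{sufficiency} is where the real work lies, and I would prove it by exhibiting a second consistent measure as an explicit perturbation of the independence coupling $\mind$ of Example \ref{exa:indep_grand}. Assume rows $i_1\ne i_2$ are non-deterministic and choose states $a_r\ne b_r$ with $p_{i_r,a_r},p_{i_r,b_r}>0$ for $r=1,2$. Define a sign pattern $s$ on $S\times S$ by $s(a_1,a_2)=s(b_1,b_2)=+1$, $s(a_1,b_2)=s(b_1,a_2)=-1$, and $s\equiv 0$ otherwise, and set
\[
\mu(\{f\})=\mind(\{f\})+\eps\,s\bigl(f(i_1),f(i_2)\bigr)\prod_{u\notin\{i_1,i_2\}}p_{u,f(u)}.
\]
Because $s$ has vanishing row-sums, column-sums, and total sum, summing the perturbation against a fixed value of any single coordinate telescopes to $0$ (the product over the remaining free coordinates sums to $1$); hence $\mu$ has exactly the same marginals as $\mind$ and so lies in $\sL_P$ once it is shown to be a probability measure. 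The key point to check is nonnegativity: wherever $s(f(i_1),f(i_2))\ne 0$ we have $f(i_r)\in\{a_r,b_r\}$, so $p_{i_1,f(i_1)},p_{i_2,f(i_2)}>0$, and consequently the perturbation vanishes at any $f$ with $\mind(\{f\})=0$; thus $\mu\ge 0$ for all sufficiently small $\eps>0$. Finally $\mu\ne\mind$, since $\mu(\{f\})>\mind(\{f\})$ at a function $f$ with $(f(i_1),f(i_2))=(a_1,a_2)$, giving $|\sL_P|\ge 2$. I expect this sufficiency construction — simultaneously securing marginal-preservation, nonnegativity, and genuine distinctness of the perturbed measure — to be the main obstacle, whereas the necessity direction is essentially immediate.
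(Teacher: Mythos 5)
Your proof is correct, but your sufficiency construction takes a genuinely different route from the paper's. The necessity direction is in substance identical to the paper's (the paper dresses it in transition-diagram language, noting that the case of no non-deterministic rows forces a directed cycle, but the core argument --- deterministic rows freeze all coordinates except at most one, so the single remaining marginal in \eqref{eq:4} determines $\mu$ --- is exactly yours). For sufficiency, the paper relabels so the two relevant rows are $1,2$, picks states $r,s$ with $0<p_{2,s}\le p_{1,r}<1$ as in \eqref{eq:pdiff}, and builds one explicit alternative coupling in which $X_1^1=r$ with conditional probability one on the event $\{X_1^2=s\}$ (a partial maximal coupling of the two rows, written out as \eqref{eq:new6}), keeping all other coordinates independent; it then notes $\mu\ne\mind$ because \eqref{eq:new4} fails under $\mind$ since $p_{1,r}<1$. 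You instead perturb $\mind$ by a signed measure supported on the rectangle $\{a_1,b_1\}\times\{a_2,b_2\}$ in the $(f(i_1),f(i_2))$-coordinates, verifying consistency from the vanishing row sums, column sums, and total sum of the sign pattern. Your route is the standard rectangle-perturbation argument for non-uniqueness in transportation polytopes: it avoids the paper's WLOG ordering of $p_{1,r}$ and $p_{2,s}$, is symmetric in the two rows, and yields strictly more, namely that $\sL_P$ contains a whole segment of couplings $\{\mu_\eps : 0\le\eps\le\eps_0\}$ and hence is infinite. The paper's construction, in exchange, has a transparent probabilistic meaning (conditional forcing of one chain by another) of the kind reused in its later coupling constructions. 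One pinprick in yours: when you claim $\mu(\{f\})>\mind(\{f\})$ at some $f$ with $(f(i_1),f(i_2))=(a_1,a_2)$, you must also choose the remaining coordinates so that $p_{u,f(u)}>0$ for all $u\notin\{i_1,i_2\}$ (possible since every row of a stochastic matrix has a positive entry); otherwise the perturbation term vanishes at that $f$.
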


\begin{proof}
It can be useful to think in terms of the transition diagram of the chain, i.e., the labelled directed graph $G$
with vertex-set $S$, and with a  directed edge from any $i$ to any $j$ such that $p_{i,j}>0$;
such an edge is labelled with the value $p_{i,j}$. The \emph{in-degree} (\resp, \emph{out-degree}) of a 
state is the number of edges directed towards it
(\resp, away from it).

Let $R$ be the set of rows that contain some entry in the interval $(0,1)$. Since $P$ is stochastic, 
such a row must contain at least two such entries. 

(a)  Let $|R|\le 1$; we will show that the independence coupling   is the unique grand coupling.
We explain first the case $|R|=0$, for which every row contains a single non-zero entry $1$,
and thus all edge-labels in $G$ are $1$.
Each state $i$ has out-degree $1$, and we write $[i,t_i\rangle$ for the unique directed edge leading from $i$. 
There are exactly $n=|S|$ edges in $G$.
Since $P$ is irreducible, $G$ is a directed self-avoiding cycle of length $n$.
Therefore, $\mc{L}_P$ contains only the probability measure $\mu=\delta_f$ that is 
supported on the single function $f$ defined by  
$f:i \mapsto t_i$ for  $i \in S$. In this trivial situation,  $\mu$ is the independence coupling
$\mind$.

Suppose that $|R|=1$, and assume without loss of generality that the row labelled $1$ is the unique row
containing an entry in $(0,1)$. Let $J=\{j: p_{1,j}\in(0,1)\}$. Edges of $G$ of the form $[1,j\rangle$
for $j\in J$ have labels in $(0,1)$ and all other edges
are labelled $1$. For $i\ne 1$, write $[i,t_i\rangle$ for the unique edge directed away from $i$.
We have
%\begin{equation}
\begin{alignat}{5}
\P(X_1^1= j) &=p_{i,j}\q&&\text{for } j \in J,\\
\P(X_1^i = t_i) &=1\q&&\text{for } i\ne 1.
\end{alignat}
%\begin{equation}
There is a unique probability measure $\mu$ consistent with the above, namely
\begin{equation}\label{eq:new7}
\mu(\{f\})= p_{1,f(1)} \prod_{i\ne 1} \delta_{f(i),t_i},\qq f\in\FF_S,
\end{equation}
where $\delta$ is the Kronecker delta. This is simply the independence coupling. Therefore, $|\sL_P|=1$.

(b) Conversely, suppose $|R|\ge 2$, and pick two rows lying in $R$, say the $i$th and $j$th with $i\ne j$. Find 
$r$, $s$ such that $p_{i,r}p_{j,s}>0$. There is no loss of generality for the proof 
that follows if we assume
$i=1$ and $j=2$, and we assume this henceforth. We may assume further that
\begin{equation}\label{eq:pdiff}
0<p_{2,s}\le p_{1,r}<1.
\end{equation}   
Before defining
$\mu$ explicitly, we describe the stochastic evolution of the first time-step of the family $(X^i: i\in S)$
of Markov chains.

For $i\in\{2,3,\dots,n\}$, we let $X_1^i$ be chosen according to $P$, that is,
\begin{equation}\label{eq:new3}
\P(X_1^i=j)=p_{i,j}, \qq j \in S.
\end{equation}
Furthermore, $Z:=\{X_1^i: i = 2,3,\dots\}$ is a set of independent 
random variables.

We turn to $X_1^1$, which we take to be
independent of $Z\sm \{X_1^2\}$. Furthermore,
\begin{align}
\P(X_1^1=r \mid  X_1^2=s) &=1, \label{eq:new4}\\
\P(X_1^1=j \mid  X_1^2 \ne s) &= 
\begin{cases} 
\dfrac{p_{1,r}-p_{2,s}}{1-p_{2,s}} &\text{if } j=r,\\
\dfrac{p_{1,j}}{1-p_{2,s}} &\text{if } j\ne r.
\end{cases}\label{eq:new5}
\end{align}
It is an exercise in conditional probability that 
the mass function of $X_1^1$ is $(p_{1,j}: j\in S)$. 
Equations \eqref{eq:new4}--\eqref{eq:new5} amount to a coupling 
of $X_1^1$ and $X_1^2$ under which 
\eqref{eq:new4} holds.  Since $p_{1,r}<1$, 
\eqref{eq:new4} fails by \eqref{eq:pdiff} under the independence coupling
$\mind$. Therefore, $\mu\ne \mind$ and hence
$|\sL_P|\ge 2$.

Equations \eqref{eq:new3}--\eqref{eq:new5} may be expressed by defining $\mu$ as follows: 
\begin{equation}\label{eq:new6}
\mu(\{f\})=
\begin{cases}
p_{2,s}\prod_{i \ne 1,2}p_{i,f(i)} & \text{ if }f(1)=r, \,f(2)=s,\\
\dfrac{p_{1,r}-p_{2,s}}{1-p_{2,s}}\prod_{i \ne 1 }p_{i,f(i)} & \text{ if }f(1)=r,\, f(2)\ne s,\\
\dfrac{1}{1-p_{2,s}}\prod_{i}p_{i,f(i)} & \text{ if }f(1)\ne r,\, f(2)\ne s,\\
0& \text{ otherwise}.
\end{cases}
\end{equation}

One may check directly that $\mu\in\sL_P$, but it is quicker to verify the probabilities implied by
\eqref{eq:new3}--\eqref{eq:new5}.
\end{proof}

\begin{remark}[Random transition matrix]\label{rem:Q}
By Theorem \ref{thm:multiple_consistent}, a \lq typical' transition matrix $P$ has multiple grand couplings.    We make this statement more precise as follows.   For given $S$, we model a \lq typical' transition matrix as the  $|S|\times|S|$ matrix with elements
\begin{equation*}
p_{i,j} = q_{i,j}/Q_i,
\end{equation*}
where the $q_{i,j}$ are independent  and uniformly distributed on $(0,1)$, and $Q_i=\sum_j q_{i,j}$. 
Let $\Leb$ denote the law of such $P$.  Note that $\Q$-a.e.\ $P$ has all entries in $(0,1)$ and hence is irreducible and aperiodic.  Moreover, by Theorem \ref{thm:multiple_consistent}, $\Q$-a.e.\ $P$ admits multiple grand couplings.
Further results for a random transition matrix may be found in Remark \ref{rem:Pqn},
Lemma \ref{lem:10},  and Section
\ref{sec:whatF}. The particular measure $\Q$ has been chosen for simplicity; 
the given applications hold for more general measures, including those generated from 
independent $q_{i,j}$ with a common, strictly positive, density function on $(0,1)$.
\END\end{remark}

We will make frequent use of the following well known fact due to Doeblin \cite{Doeblin} (see also \cite[p.\ 260]{GS}).

\begin{theorem}[\cite{Doeblin}]
\label{thm:indep_copies_aperiodic}
Let $(X^i:i \in S)$ be \emph{independent} Markov chains on a finite state space $S$ with common irreducible, aperiodic transition matrix $Q$, with $X^i_0=i$ a.s.~for each $i\in S$.    
For $i,j\in S$, there exists a.s.\ a finite time $T$ such that $X^i_T=X^j_T$.
\end{theorem}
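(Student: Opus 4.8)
The plan is to regard the pair $(X^i,X^j)$ as a single Markov chain on the product state space $S\times S$, and to show that this chain almost surely hits the diagonal $\Delta=\{(a,a):a\in S\}$; the first time it does so is exactly the meeting time $T$, so it suffices to prove that $\Delta$ is reached in finite time with probability $1$.

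First I would form the product process $W_t=(X^i_t,X^j_t)$ for $t=0,1,\dots$. Because $X^i$ and $X^j$ are independent chains, each with transition matrix $Q$, the process $W$ is itself a Markov chain on $S\times S$, with transition probabilities $\P\bigl(W_{t+1}=(c,d)\mid W_t=(a,b)\bigr)=q_{a,c}\,q_{b,d}$; that is, its transition matrix is the tensor product $Q\otimes Q$.

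Next I would upgrade irreducibility of $Q$ to irreducibility of $W$ using aperiodicity. Since $Q$ is irreducible and aperiodic, it is primitive: there exists $N$ such that every entry of $Q^N$ is strictly positive. The $N$-step transition matrix of $W$ equals $(Q\otimes Q)^N=Q^N\otimes Q^N$, each entry of which is a product of two strictly positive numbers, hence strictly positive. Therefore $W$ can pass from any state of $S\times S$ to any other in $N$ steps, and so $W$ is an irreducible chain on the finite set $S\times S$. A finite irreducible Markov chain is recurrent, so $W$ almost surely visits every state, in particular visiting the nonempty diagonal $\Delta$ at some a.s.\ finite time $T$; and $W_T\in\Delta$ means precisely $X^i_T=X^j_T$, as required.

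The only step requiring genuine care---and the main obstacle if one overlooks it---is the appeal to aperiodicity. Irreducibility of $Q$ by itself is not enough: if $Q$ has period $d\ge2$, the coordinates $X^i_t$ and $X^j_t$ may be permanently confined to distinct cyclic classes (e.g.\ opposite parities when $d=2$ and $i,j$ start in classes of different parity), so that $W$ is reducible and $\Delta$ is never hit. Aperiodicity is exactly what converts irreducibility of $Q$ into primitivity, and thereby into irreducibility of the product chain $Q\otimes Q$.
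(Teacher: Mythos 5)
Your proof is correct. The paper does not actually prove this theorem---it is quoted as a classical fact of Doeblin, with references to \cite{Doeblin} and to Grimmett--Stirzaker---and your product-chain argument (pass to $W_t=(X^i_t,X^j_t)$ with transition matrix $Q\otimes Q$, use aperiodicity to get primitivity of $Q$ and hence irreducibility of the product chain, then invoke recurrence of finite irreducible chains to hit the diagonal) is precisely the standard proof found in those references, including the correctly identified essential role of aperiodicity.
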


The following classical result of  Birkhoff and von Neumann will be useful later.

\begin{theorem}[\cite{Birkh,vN}]\label{birk}
A stochastic matrix $P$ on the finite state space $S$ is doubly stochastic
if and only if it lies in the convex hull of the set $\Pi_S$ of permutation matrices.
\end{theorem}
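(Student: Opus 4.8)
The plan is to prove the two inclusions separately, the forward one being routine and the reverse one carrying all the content. For the easy direction, I would note that each permutation matrix is doubly stochastic (every row and column has a single entry equal to $1$), and that the set of doubly stochastic matrices is convex, being the intersection of the affine subspace cut out by the row-sum and column-sum constraints with the nonnegative orthant. Hence any convex combination of members of $\Pi_S$ is again doubly stochastic, giving one inclusion immediately.

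The substance lies in showing that an arbitrary doubly stochastic $P$ lies in the convex hull of $\Pi_S$. The key step is a \textbf{support lemma}: there exists a permutation $\sigma$ of $S$ with $p_{i,\sigma(i)}>0$ for every $i$. I would deduce this from Hall's marriage theorem applied to the bipartite graph $H$ on two copies of $S$ (rows and columns) with an edge $i\sim j$ precisely when $p_{i,j}>0$; a perfect matching in $H$ is exactly such a permutation. To verify Hall's condition, fix a set $A$ of rows and let $N(A)$ be its neighbourhood among the columns. Since each row of $A$ sums to $1$, the total mass carried by the rows in $A$ equals $|A|$, and by definition of $N(A)$ all of this mass sits in columns belonging to $N(A)$. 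As each column of $P$ also sums to $1$, the columns in $N(A)$ carry total mass exactly $|N(A)|$, whence $|A|\le|N(A)|$. Thus Hall's condition holds and the matching, hence $\sigma$, exists.

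Granting the support lemma, I would conclude by a \emph{peeling induction} on the number of strictly positive entries of $P$. Writing $\Pi$ for the permutation matrix of $\sigma$ and setting $c=\min_i p_{i,\sigma(i)}\in(0,1]$, either $c=1$, in which case $P=\Pi\in\Pi_S$, or else $P'=(1-c)^{-1}(P-c\Pi)$ is again doubly stochastic, has nonnegative entries, and has strictly fewer positive entries than $P$, since the entry attaining the minimum is annihilated. By the induction hypothesis $P'$ is a convex combination of permutation matrices, and therefore so is $P=c\Pi+(1-c)P'$; the induction terminates because each step strictly lowers the positive-entry count, the minimal count $|S|$ forcing a permutation matrix. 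The main obstacle throughout is the support lemma, and within it the verification of Hall's condition by the mass-counting argument above; once that is in hand, the remainder is bookkeeping.
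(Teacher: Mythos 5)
Your proof is correct. Note, however, that the paper does not prove this statement at all: it quotes the Birkhoff--von Neumann theorem as a classical result, citing \cite{Birkh,vN}, and uses it as a black box (in Section \ref{sec:3} and in Proposition \ref{prop:DP}(a)). So there is no internal proof to compare against; what you have written is the standard textbook argument, and it is sound. The easy direction (convexity of the doubly stochastic polytope plus the fact that permutation matrices are doubly stochastic) is fine; your verification of Hall's condition is the right mass-counting argument, since the mass $|A|$ emitted by the rows of $A$ lands entirely in the columns of $N(A)$, which can absorb at most $|N(A)|$ in total; and the peeling induction is watertight, because subtracting $c\Pi$ with $c=\min_i p_{i,\sigma(i)}$ preserves double stochasticity after renormalisation, kills at least one positive entry, creates none, and the count of positive entries is bounded below by $|S|$, which is attained exactly by permutation matrices.
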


\section{Coalescence of trajectories}\label{sec:cftp}

Consider a Markov chain $X$ on the state space $S=\{1,2,\dots,n\}$ with
irreducible transition matrix $P$, and let $\mu\in\sL_P$ be a grand coupling. 
We consider the coalescence of the chains $X^i$ in this section,
and begin with some notation from \cite{ncftp}.

As in Section \ref{sec:grand} (see \eqref{eq:fcoupl}), 
let $F_1,F_2,\dots$ be independent random functions distributed
according to $\mu$, and set 
$X^i_t=\gvec F_t(i)$ where $i\in S$ and
\begin{equation}\label{eq:defvF}
\gvec F_t := F_t \circ F_{t-1}\circ\cdots\circ F_1.
\end{equation}
Then  $(X^i:i\in S)$ is
a family of coupled Markov chains running forwards in time, each having transition matrix $P$,
and such that $X^i$ starts in state $i$.

For $i,j\in S$, let 
\begin{equation}\label{eq:co}
T_{i,j}=\inf\{t:X^i_t=X^j_t\}.
\end{equation}
We say that $i$ and $j$ \emph{coalesce} (and write $i \sim j$)  if $T_{i,j}<\oo$.  
The \emph{forward coalescence time}
is given by
\begin{align}
T&=\inf\bigl\{t:    \gvec F_t(\cdot)\text{ is a constant function}\bigr\}
= \sup_{i,j\in S}T_{i,j}.\label{eq:cotime2}
\end{align}
We say that \emph{coalescence occurs} if $\P(T<\oo)=1$.

\begin{lemma}\label{lem:eq}
The relation $\sim$ is a (random)  equivalence relation on $S$.
\end{lemma}

\begin{proof}
It suffices to prove the transitivity of $\sim$. Let $i\sim j$ and $j\sim k$.
For $t\ge \max\{T_{i,j}, T_{j,k}\}$ we have that $X_t^i = X_t^j = X_t^k$. The claim follows.
\end{proof}

The equivalence classes of $\sim$ are termed the \emph{coalescence classes} of 
$(X^i:i \in S)$.  
The number of coalescence classes is a.s.~constant (see \cite[Lemma 2]{ncftp}),
and we denote this number by $k(\mu)$ and call it the \emph{coalescence number} of $\mu$. 
We define
\begin{equation}\label{eq:cono}
K(P)=\{k(\mu): \mu\in \sL_P\}.
\end{equation}

\begin{remark}[Coupling from the past (CFTP)]\label{rem:fb}
CFTP is a prominent algorithm
introduced by Propp and Wilson \cite{PW2,PW3,WP1} for perfect simulation 
from  the invariant distribution of an 
irreducible, aperiodic Markov chain on a finite state space.  The CFTP algorithm may be defined 
by replacing the function $\gvec F_t$ in \eqref{eq:defvF} by the function 
$\gcev F_t := F_1 \circ F_{2} \circ \dots \circ F_t$.
The \emph{backward coalescence time} is defined by
\begin{equation}\label{eq:cotime}
C=\inf\bigl\{t:    \gcev F_t(\cdot)\text{ is a constant function}\bigr\},
\end{equation}
and \emph{backward coalescence} is said to  occur if $\P(C<\oo)=1$.
On the event $\{C<\oo\}$, $\gcev F_C$ may be regarded
as a random state, and the main theorem of CFTP asserts that, if backward coalescence occurs, then
$\gcev F_C$ is distributed as the invariant distribution of the transition matrix $P$.

The relationship between forward and backward coalescence was explored in \cite{ncftp}.  
It turns out that $C$ 
and $T$ are identically distributed. Furthermore, the CFTP process has the same set $K(P)$ of coalescence numbers
as the forward process. 
In contrast, there is a significant difference between forward and backward coalescence in that,  
if $\gvec F_t(i)=\gvec F_t(j)$, then $\gvec F_{t+1}(i)=\gvec F_{t+1}(j)$, whereas 
the corresponding statement for backward coalescence is false in general.  
The last occurs whenever the coalescing classes in the forward direction are 
non-deterministic (see, for example, 
the forthcoming Examples \ref{ex:7} and \ref{ex:sub_blocks}(b)).
\END\end{remark}

The following two questions are fundamental to understanding coalescence.

\begin{question}\label{q:0}
\mbox{\hfill}
\begin{numlist}
\item
Can we determine the set $K(P)$ for given $P$?
\item
Which $\mu\in\sL_P$ have $k(\mu)=1$?
\end{numlist}
\end{question}
We shall see in Theorem \ref{lem:8} that the independence coupling
$\mind$ of Example \ref{exa:indep_grand} satisfies $k(\mind)\le k(\mu)$
for all $\mu\in\sL_P$, and moreover $k(\mind)=1$
if and only if $P$ is aperiodic.

\emph{Henceforth, expressions involving the word \lq coalescence'  
shall refer to \emph{forward} coalescence.}
Let $\mu$ be a probability measure on $\FF_S$, 
and let $\supp(\mu)$ denote the support of $\mu$.
Let 
$F=(F_s:s \in \N)$ be a vector of independent and identically distributed 
random functions, each with law $\mu$.  The law of $F$ is the product measure
$\bmu=\prod_{i\in \N}\mu$. 

\begin{remark}[Avoidance coupling]\label{rem:kmax}
Let $P\in\sP_S$ and let 
\begin{equation*}
\kmax=\kmax(P):= \max\{k(\mu): \mu\in\sL_P\}.
\end{equation*}
That is, $\kmax$ is the maximum $k$ such that: there exists
some grand coupling $\mu$ for which there exist $k$ (possibly random) 
initial states whose trajectories avoid one 
another for all time.
The identification
of $\kmax$  might be termed the \lq avoidance problem with simultaneous updating'.
The related problem of avoidance coupling with \emph{sequential} updating was initiated
in \cite{AHMWW} for random walk on a graph and has been developed further by others
(see, for example, \cite{BP}). 
\END\end{remark}

Recall that, since $P$ is finite and irreducible, it has a \emph{unique} invariant distribution
(see, for example, \cite[Thm 6.4.3]{GS}).

\begin{theorem}\label{thm:suffc}
Let $P\in\sP_S$ have (unique) invariant distribution $\bpi$.
\begin{letlist}
\item Let $m\in \N$, and suppose there exists $s\in S$ with $\pi_s>1/m$. Then
$\kmax< m$.
\item
Let $m\in \N$, and suppose there exists $s\in S$ such that $p_{i,s} > 1/m$ for all $i\in S$.
Then $\kmax< m$.
\end{letlist}
\end{theorem}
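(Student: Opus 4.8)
The plan is to exploit a single pathwise observation: representatives of distinct coalescence classes never share a state, so at each time at most one of them sits at $s$. Combining this with the ergodic theorem converts the hypothesis $\pi_s>1/m$ into a bound on the number of coalescence classes. Fix a grand coupling $\mu\in\sL_P$, realised through i.i.d.\ functions $F_1,F_2,\dots$ as in \eqref{eq:fcoupl}, and write $k=k(\mu)$ for the (a.s.\ constant) number of coalescence classes. Choose one representative from each class, say $i_1,\dots,i_k$ (for definiteness, the least index in each class); these are well-defined $S$-valued random variables. For $l\ne l'$ we have $i_l\not\sim i_{l'}$, so $T_{i_l,i_{l'}}=\oo$ and hence $X^{i_l}_t\ne X^{i_{l'}}_t$ for every $t$. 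Consequently, at each time the $k$ representatives occupy distinct states, and in particular
\[
\sum_{l=1}^{k}\indic{X^{i_l}_t=s}\le 1,\qq t\ge 0.
\]

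For part (a), I would average this pathwise inequality over time: summing over $t=0,\dots,N-1$ and dividing by $N$ gives $\sum_{l=1}^{k}\frac1N\sum_{t=0}^{N-1}\indic{X^{i_l}_t=s}\le 1$. Each $X^{i}$ is a copy of the irreducible chain with matrix $P$, so the ergodic theorem yields $\frac1N\sum_{t=0}^{N-1}\indic{X^{i}_t=s}\to\pi_s$ almost surely for every fixed $i\in S$; since $S$ is finite this convergence holds simultaneously for all $i$, and therefore along the random index $i_l$ as well. Letting $N\to\oo$ gives $k\pi_s\le 1$, that is $k(\mu)\le 1/\pi_s$. As $\pi_s>1/m$ forces $1/\pi_s<m$ and $k(\mu)$ is an integer, we obtain $k(\mu)<m$; taking the maximum over $\mu\in\sL_P$ yields $\kmax<m$.

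For part (b), I would deduce the hypothesis of (a) from invariance. Since $\bpi P=\bpi$, we have $\pi_s=\sum_{i\in S}\pi_i p_{i,s}$. As $P$ is irreducible, $\bpi$ is strictly positive; combined with $p_{i,s}>1/m$ for all $i$ and $\sum_{i}\pi_i=1$, this gives $\pi_s=\sum_{i}\pi_i p_{i,s}>\frac1m\sum_{i}\pi_i=\frac1m$. Thus the hypothesis of (a) holds for this same $s$ and $m$, and part (a) delivers $\kmax<m$.

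The main obstacle to watch is the randomness of the coalescence classes. Because each representative $i_l$ is determined by the whole sequence $(F_t)$, a naive in-expectation argument fails: one cannot treat $X^{i_l}_1$ as having law $(p_{i_l,j}:j\in S)$, since $i_l$ is not independent of $F_1$. The \emph{pathwise} almost-sure ergodic theorem circumvents this, as the time-average converges for every fixed starting index and $S$ is finite, so it converges along $i_l$ automatically. A secondary point is that $P$ is assumed only irreducible, not aperiodic, so one must use the time-average (Cesàro) form of the ergodic theorem rather than convergence of $\P(X^i_t=s)$; this form is valid for every finite irreducible chain.
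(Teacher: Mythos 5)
Your proof is correct and follows essentially the same route as the paper: both arguments rest on the ergodic-theorem fact that each trajectory visits $s$ with asymptotic density $\pi_s$, combined with the observation that trajectories started from non-coalescing states are never simultaneously at $s$, and part (b) is reduced to part (a) by the identical invariance computation $\pi_s=\sum_i \pi_i p_{i,s}>1/m$. The only difference is organizational: the paper pigeonholes among $m$ fixed deterministic states to force two of them to coalesce, whereas you sum the occupation densities of the $k(\mu)$ random class representatives to obtain $k(\mu)\pi_s\le 1$, and your handling of the randomness of those representatives (simultaneous a.s.\ convergence over the finitely many starting states) is precisely what makes that variant sound.
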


\begin{proof}
(a) Assume $\pi_s> 1/m$, and let $i_1, i_2,\dots,i_m$ be distinct elements of $S$. 
For each $k\in\{1,2,\dots,m\}$,
there is asymptotic density $\pi_s$ of times $n$ at which $X_n^{i_k}=s$.
Since $\pi_s>1/m$, there exist (a.s.) distinct $i,j\in\{i_1,i_2,\dots,i_m\}$ such that 
there is a strictly positive density of times $n$ at which $X_n^i=X_n^j=s$. 
Therefore, $k(\mu)<m$ for all $\mu\in \sL_P$.

(b) Assume the given condition holds. Then
\begin{equation}\label{eq:Nsuffc}
\pi_s=\sum_{i\in S} \pi_i p_{i,s} > \frac1m.
\end{equation}
The conclusion holds by part (a).
\end{proof}
\begin{remark}\label{rem:suffc}
The condition of part (b) may be changed slightly,
as follows. Suppose $p_{i,s} > 1/m$ for all $i\in S$ with $i\ne s$. Then \eqref{eq:Nsuffc} becomes 
\begin{equation*}
\pi_s \ge \sum_{i\ne s} \pi_i p_{i,s}
> \frac{1}{m}(1-\pi_s).
\end{equation*}
Therefore, $\pi_s>1/(m+1)$,
whence $\kmax< m+1$ by part (a).
\END\end{remark}

\begin{remark}[Random transition matrix]\label{rem:Pqn}
By Theorem \ref{thm:suffc}, if there exists $s\in S$ with $\pi_s>\frac12$,
then $k(\mu)=1$ for all $\mu\in \sL_P$.  In particular, there is strictly positive $\Leb$-probability
(see Remark \ref{rem:Q}) 
that a random transition matrix $P$ satisfies $K(P)=\{1\}$.
\END\end{remark}

%\begin{question}
\begin{open}
\label{q:1}
Is it true that $K(P)=\{1\}$ for $\Leb$-a.e.~$P$?
\end{open}
%\end{question}
%This question is open. 
By Lemma \ref{lem:8}(a) below, we have that $1 \in K(P)$ for $\Leb$-a.e.\ $P$.

Although $k(\mu)$ is a.s.\ constant,
the coalescence classes of $\sim$ need not themselves be a.s.\ constant.
Here is an example of this, preceded by some notation.

\begin{definition}\label{def:fn}\mbox{\hfill}
Let $f \in \sF_S$ where $S=\{1,2,\dots,n\}$. We write $f=(j_1j_2\ldots j_n)$ if
$f(r)=j_r$ for $r=1,2,\dots,n$.\END
\end{definition}

\begin{example}\label{ex:7}
Take $S=\{1,2,3,4\}$ and any 
	consistent pair $(P,\mu)$
	with 
	\begin{equation*}
	\supp(\mu)=\{f_{1,1},f_{1,2},f_{2,1},f_{2,2}\}
	\end{equation*}
 where
	\begin{equation*}
f_{1,1}=(1212),\q f_{1,2}=(1221),\q f_{2,1}=(3434), \q f_{2,2}=(3443).
		\end{equation*}
Then $k(\mu)=2$ but the coalescence classes
of $\gvec F$ may be either $\{1,3\}$, $\{2,4\}$ or $\{1,4\}$, $\{2,3\}$,
each having a strictly positive probability.
The value of $F_1$ determines which of these two possibilities 
occurs\footnote{In the case of backward coalescence, for $r\in \{1,2\}^2$,
the `first' function to be applied is $f_r$ a.s.\ infinitely often,
	whence  $\gcev F_t(1)=  \gcev{F}_t(3)$ infinitely often and 
	$\gcev{F}_t(1)\ne   \gcev{F}_t(3)$ infinitely often.}.	The four functions $f_{i,j}$ are illustrated in Figure \ref{random_coalesce}. 
	\END
\end{example}

\def\x{3.8}
\def\y{7.6}
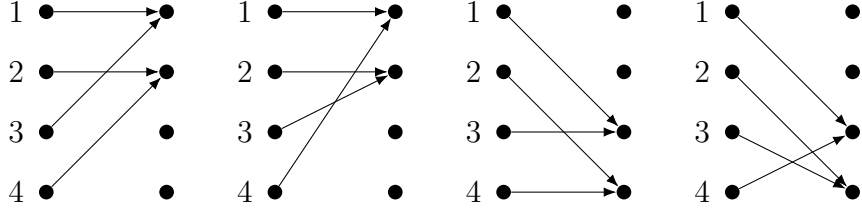
\begin{figure}
	\begin{center}
		\begin{tikzpicture}[scale=.8]
		\node (AA1) at (-.5,0) {1};
		\node (AA2) at (-.5,-1) {2};
		\node (AA3) at (-.5,-2) {3};
		\node (AA4) at (-.5,-3) {4};
		\node[scale=0.5,circle,fill=black] (A1) at (0,0) {};
		\node[scale=0.5,circle,fill=black] (A2) at (0,-1) {};
		\node[scale=0.5,circle,fill=black] (A3) at (0,-2) {};
		\node[scale=0.5,circle,fill=black] (A4) at (0,-3) {};
		\node[scale=0.5,circle,fill=black] (B1) at (2,0) {};
		\node[scale=0.5,circle,fill=black] (B2) at (2,-1) {};
		\node[scale=0.5,circle,fill=black] (B3) at (2,-2) {};
		\node[scale=0.5,circle,fill=black] (B4) at (2,-3) {};
		\draw[-Latex] (A1)--(B1);
		\draw[-Latex] (A2)--(B2);
		\draw[-Latex] (A3)--(B1);
		\draw[-Latex] (A4)--(B2);

		\node (CC1) at (-.5+\x,0) {1};
		\node (CC2) at (-.5+\x,-1) {2};
		\node (CC3) at (-.5+\x,-2) {3};
		\node (CC4) at (-.5+\x,-3) {4};
		\node[scale=0.5,circle,fill=black] (C1) at (0+\x,0) {};
		\node[scale=0.5,circle,fill=black] (C2) at (0+\x,-1) {};
		\node[scale=0.5,circle,fill=black] (C3) at (0+\x,-2) {};
		\node[scale=0.5,circle,fill=black] (C4) at (0+\x,-3) {};
		\node[scale=0.5,circle,fill=black] (D1) at (2+\x,0) {};
		\node[scale=0.5,circle,fill=black] (D2) at (2+\x,-1) {};
		\node[scale=0.5,circle,fill=black] (D3) at (2+\x,-2) {};
		\node[scale=0.5,circle,fill=black] (D4) at (2+\x,-3) {};
		\draw[-Latex] (C1)--(D1);
		\draw[-Latex] (C2)--(D2);
		\draw[-Latex] (C3)--(D2);
		\draw[-Latex] (C4)--(D1);
		
		\node (EE1) at (-.5+\y,0) {1};
		\node (EE2) at (-.5+\y,-1) {2};
		\node (EE3) at (-.5+\y,-2) {3};
		\node (EE4) at (-.5+\y,-3) {4};
		\node[scale=0.5,circle,fill=black] (E1) at (0+\y,0) {};
		\node[scale=0.5,circle,fill=black] (E2) at (0+\y,-1) {};
		\node[scale=0.5,circle,fill=black] (E3) at (0+\y,-2) {};
		\node[scale=0.5,circle,fill=black] (E4) at (0+\y,-3) {};
		\node[scale=0.5,circle,fill=black] (F1) at (2+\y,0) {};
		\node[scale=0.5,circle,fill=black] (F2) at (2+\y,-1) {};
		\node[scale=0.5,circle,fill=black] (F3) at (2+\y,-2) {};
		\node[scale=0.5,circle,fill=black] (F4) at (2+\y,-3) {};
		\draw[-Latex] (E1)--(F3);
		\draw[-Latex] (E2)--(F4);
		\draw[-Latex] (E3)--(F3);
		\draw[-Latex] (E4)--(F4);

		\node (GG1) at (-.5+3*\x,0) {1};
		\node (GG2) at (-.5+3*\x,-1) {2};
		\node (GG3) at (-.5+3*\x,-2) {3};
		\node (GG4) at (-.5+3*\x,-3) {4};
		\node[scale=0.5,circle,fill=black] (G1) at (0+3*\x,0) {};
		\node[scale=0.5,circle,fill=black] (G2) at (0+3*\x,-1) {};
		\node[scale=0.5,circle,fill=black] (G3) at (0+3*\x,-2) {};
		\node[scale=0.5,circle,fill=black] (G4) at (0+3*\x,-3) {};
		\node[scale=0.5,circle,fill=black] (H1) at (2+3*\x,0) {};
		\node[scale=0.5,circle,fill=black] (H2) at (2+3*\x,-1) {};
		\node[scale=0.5,circle,fill=black] (H3) at (2+3*\x,-2) {};
		\node[scale=0.5,circle,fill=black] (H4) at (2+3*\x,-3) {};
		\draw[-Latex] (G1)--(H3);
		\draw[-Latex] (G2)--(H4);
		\draw[-Latex] (G3)--(H4);
		\draw[-Latex] (G4)--(H3);
		\end{tikzpicture}
	\end{center}
	\caption{Diagrammatic representations of the four
	functions $f_{i,j}$ of Example \ref{ex:7}.} %ex:nonblock}.}
	\label{random_coalesce}
\end{figure}

%\begin{question}
\begin{open}
\label{q:2}
For given $P\in\sP_S$, is the set of cardinalities (possibly with repetition)
of coalescing classes a deterministic set? 
\end{open}
%\end{question}
%This question is open.

A probability measure $\mu$ on $\FF_S$ may be written in the form
\begin{equation}\label{eq:5}
\mu=\sum_{f\in\FF_S} \alpha_f \delta_f,
\end{equation}
where $\bal$ is a probability mass function on $\FF_S$ with support $\supp(\mu)$,
and $\delta_f$ is the Dirac delta-mass on the point  $f \in \FF_S$.  
Thus, $\alpha_f>0$ if and only if $f \in\supp(\mu)$.
We denote by $M(\sF_S)$ the set of all probability measures on $\sF_S$, noting that they may be represented
in the form \eqref{eq:5}.

In advance of stating an extension of \cite[Lemma 3]{ncftp}, we remind the reader of the definition of a cyclic class.

\begin{definition}\label{def:cyclic}
Consider an irreducible Markov chain on the state space $S$ with transition matrix $P$
and period $d$. There exists a unique partition
of $S$ into $d$ classes $S_0,S_1,\dots,S_{d-1}$ such that
\begin{equation*}
\sum_{j\in S_{r+1}} p_{i,j} = 1, \qq i\in S_r,\ r=0,1,\dots,d-1,
\end{equation*}
where by convention $S_0 = S_d$. The sets $S_r$ are called the \emph{cyclic classes} of the chain.\END
\end{definition}

See \cite[Sect.\ 2.3.2]{Bremaud} for further details of cyclic classes.

\begin{theorem}\label{lem:8}\mbox{\hfil}
\begin{letlist}
\item
We have that $1\in K(P)$ if and only if $P\in\sP_S$ is aperiodic. 
In this case, $k(\mind)=1$.
\item
If $P\in\sP_S$ has period $d$, then the coalescence classes of $\mind$
are a.s.\ the cyclic classes of $P$ and therefore  $k(\mind)=d$.
Hence, $d\in K(P)$, and moreover $d\le k$ for all $k\in K(P)$. 
\end{letlist}
\end{theorem}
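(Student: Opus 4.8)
The plan is to isolate two facts and assemble both parts from them. The first is a deterministic obstruction from periodicity: if $P$ has period $d$ with cyclic classes $S_0,\dots,S_{d-1}$, then for \emph{every} grand coupling $\mu\in\sL_P$, two chains started in distinct cyclic classes never meet. Indeed, since each $X^i$ has the marginal law of the chain with transition matrix $P$, if $i\in S_r$ then $X^i_t\in S_{(r+t)\bmod d}$ almost surely, for all $t$. For $i\in S_r$ and $j\in S_{r'}$ with $r\ne r'$, the classes $S_{(r+t)\bmod d}$ and $S_{(r'+t)\bmod d}$ are disjoint for every $t$, so $X^i_t\ne X^j_t$ for all $t$ and hence $i\not\sim j$. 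Consequently each coalescence class is contained in a single cyclic class, and since the $S_r$ are nonempty this forces $k(\mu)\ge d$ for all $\mu\in\sL_P$; this already yields the final inequality $d\le k$ for all $k\in K(P)$ of part (b).

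The second fact is that, under the independence coupling $\mind$, any two states lying in a common cyclic class coalesce a.s. Here I would first note that under $\mind$ the random functions $F_t$ assign independent values at distinct points, so while $X^i_t\ne X^j_t$ the updates $X^i_{t+1}=F_{t+1}(X^i_t)$ and $X^j_{t+1}=F_{t+1}(X^j_t)$ are independent; thus, up to their meeting time, $X^i$ and $X^j$ evolve as two \emph{independent} copies of the chain. Fixing a cyclic class $S_r$ and $i,j\in S_r$, I would pass to the chains observed at the times $0,d,2d,\dots$. These are independent Markov chains on $S_r$ with transition matrix $Q:=(P^d)|_{S_r}$, which is irreducible and aperiodic (a standard fact; see \cite[Sect.\ 2.3.2]{Bremaud}). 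Applying Doeblin's theorem (Theorem \ref{thm:indep_copies_aperiodic}) to $Q$ produces a finite time, a multiple of $d$, at which the two sub-sampled chains agree, so $T_{i,j}<\oo$ a.s. Taking the finite union over all pairs $i,j$ in a common cyclic class shows that, a.s., each cyclic class is a single coalescence class.

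Combining the two facts proves part (b): by the first fact no coalescence class crosses cyclic classes, and by the second every cyclic class coalesces, so a.s.\ the coalescence classes of $\mind$ are exactly $S_0,\dots,S_{d-1}$, whence $k(\mind)=d$ and $d\in K(P)$. Part (a) then follows immediately. If $P$ is aperiodic then $d=1$, so part (b) gives $k(\mind)=1$ and $1\in K(P)$; conversely, if $1\in K(P)$ then some $\mu$ has $k(\mu)=1$, while the first fact forces $k(\mu)\ge d$, whence $d=1$ and $P$ is aperiodic. The only point needing real care — and the main (if mild) obstacle — is the justification that under $\mind$ the pre-meeting evolution of a pair is genuinely that of two independent chains, so that Doeblin's theorem may legitimately be invoked on the sub-sampled chains; the rest is bookkeeping with cyclic classes.
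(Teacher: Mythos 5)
Your proof is correct and takes essentially the same approach as the paper: the cyclic-class obstruction valid for every $\mu\in\sL_P$, plus Doeblin's theorem (Theorem \ref{thm:indep_copies_aperiodic}) applied to the irreducible, aperiodic diagonal blocks of $P^d$ under the independence coupling, with the only structural difference being that you deduce part (a) from part (b) rather than proving (a) first. The one point you flag but leave informal --- that under $\mind$ the pair $(X^i,X^j)$ has, up to its meeting time, the law of two independent chains, so that Doeblin applies to the sub-sampled processes --- is exactly what the paper makes rigorous by starting from genuinely independent copies $Z^i,Z^j$, applying Doeblin to their $d$-step subsamples, and gluing them together after their first meeting time to produce a pair with the correct coupled law.
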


Consider the particular case when all entries of $P$ are strictly positive. Since
$P$ is aperiodic, 
we have by Theorem \ref{lem:8}(a) that $1\in K(P)$. 
However, $K(P)$ may be larger than a singleton; see the
forthcoming Theorem \ref{thm:6}.

\begin{proof}[Proof of Theorem \ref{lem:8}]
(a) This is found at the proof of \cite[Lemma 3]{ncftp}, but is included here for clarity.
The independence coupling of Example \ref{exa:indep_grand} gives rise to $n=|S|$ chains with 
transition matrix $P$, starting from $1,2,\dots, n$,
respectively, that evolve independently until they meet.  
If $P$ is aperiodic (and, by assumption, irreducible) then all $n$ chains meet a.s.\ in finite time.  	
The last holds by Theorem \ref{thm:indep_copies_aperiodic}, since any paths that meet will remain together thenceforth
under this coupling.
	
Conversely, if $P$ is periodic and $p_{i,j}>0$ then $i\ne j$, and $i$ and $j$ can never coalesce, 
implying $1\notin K(P)$. 

(b) Suppose $P$ has period $d\ge 2$, and let $S_0,S_1,\dots,S_{d-1}$ be the cyclic classes of $P$ 
(see Definition \ref{def:cyclic}). For $r\ne s$,
states $i\in S_r$ and $j\in S_s$ do not coalesce for any $\mu \in \sL_P$. Now, $P^d$ has a block diagonal form with (transition)
matrices $E_0,E_1,\dots,E_{d-1}$
along its diagonal. Each $E_r$ is irreducible and aperiodic. By the argument above, subject
to the independence coupling  $\mind$, 
any two states in any given $S_r$ coalesce a.s. 

For clarity, we give some more details of the last step. Fix $r$ and let $i,j\in S_r$.
Let $Z^k=(Z_t^k: t=0,1,\dots)$ be a copy of $X^k$, and suppose 
the chains $(Z^k: k\in S_r)$
evolve independently (unlike the sequence $(X^k)$ whose members coalesce when they meet).
Now, $Z^{k,d} :=(Z^k_{md}: m= 0,1,\dots)$ has irreducible, aperiodic transition matrix $E_r$. 
By Theorem \ref{thm:indep_copies_aperiodic} 
there exists
a.s.~some time $T<\oo$ at which $Z^i_{Td}=Z^j_{Td}$. Therefore, there is some earliest time $U\le Td$
at which $Z^i_{U}=Z^j_{U}$.
From the pair $Z^i$, $Z^j$ we construct $X^i$, $X^j$ by 
$$
(X^i_t,X^j_t) = 
\begin{cases}
(Z^i_t,Z^j_t) &\text{for } t\le U,\\
(Z^i_t,Z^i_t) &\text{for } t> U.
\end{cases}
$$
Since the pair $(X^i,X^j)$ has the same law as under $\bmu$, this proves that $i$ and $j$ coalesce a.s.\ under $\bmu$.
This holds for all pairs of distinct states in $S_r$, and  
two states stick together after they coaleasce.
The second claim of the theorem is proved.

The minimality of $d$ holds since no two states in distinct cyclic classes may coalesce.
\end{proof}

\section{Lumpability and block measures}\label{sec:3}
The notion of lumpability was introduced in 1963 by Kemeny and Snell, \cite{KS}.
A Markov chain $X$ with finite state space $S$ is called lumpable  
if
there exists a partition $\sS$ of $S$ such that the projection of $X$ onto  
$\sS$ is itself a Markov chain. 

Here is a more precise definition.
A partition $\sS=\{S_1,S_2,\dots, S_\ell\}$ of $S$ is called \emph{trivial} if $|\mc{S}|\in \{1,|S|\}$, (i.e.,
if either  $\sS=\{S\}$ or $\sS=\{\{s\}: s\in S\}$). Elements of the partition $\sS$ are called
\emph{blocks}. 
The chain $X$ is called \emph{$\sS$-lumpable} if the sequence $(W_t:t=0,1,\dots)$, defined by 
\begin{equation}\label{eq:deflump}
W_t=j \q\text{if}\q X_t\in S_j,
\end{equation}
forms a Markov chain. The process $X$ is called \emph{lumpable} if
it is $\sS$-lumpable for some non-trivial partition $\sS$.

There is a limited literature on lumpable chains, for which the reader may consult, for example, \cite{geiger, gdpy, marin} and  \cite[Exer.\ 6.1.13]{GS}.
Kemeny and Snell \cite{KS} proved a necessary and sufficient condition for
$X$ to be $\sS$-lumpable, namely the following.

\begin{theorem}[\cite{KS}]\label{thm:KS}
Let $P=(p_{i,j}:i,j\in S)$.
Let $\sS$ be a partition of $S$, and let
\begin{equation}\label{eq:lump1}
\lambda_{r,s}^{(i)}= \sum_{j \in S_s} p_{i,j}, \q i \in S_r.
\end{equation}
A Markov chain $X$ with transition matrix $P$ 
%of \eqref{eq:deflump} 
 is $\sS$-lumpable if and only if, 
\begin{equation}\label{eq:lump2}
\text{for every $r$, $s$, we have that $\lambda_{r,s} := \lambda_{r,s}^{(i)}$ is constant 
for $i\in S_r$.}
\end{equation}
\end{theorem}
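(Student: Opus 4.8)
The plan is to prove the two implications separately, reading lumpability in the Kemeny--Snell sense that the lumped process $W$ is a Markov chain \emph{for every} initial distribution of $X_0$ (equivalently, for every deterministic starting state), with a single transition matrix serving all such initial laws. This quantifier over initial distributions is exactly what turns the criterion into a property of $P$ alone, and keeping it in view is the key to both directions.

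For the sufficiency direction ($\eqref{eq:lump2}\Rightarrow$ lumpable), I would compute the one-step conditional law of $W$ given its entire past. Fix a time $t$ and a history $w_0,\ldots,w_{t-1},w_t=r$ of positive probability. Conditioning additionally on $\{X_t=i\}$ for $i\in S_r$, the Markov property of $X$ gives $\P(W_{t+1}=s\mid X_t=i,\ \mathrm{past\ of\ }W)=\sum_{j\in S_s}p_{i,j}=\lambda_{r,s}^{(i)}$, which by hypothesis equals the constant $\lambda_{r,s}$ irrespective of $i$. Averaging over the conditional distribution of $X_t$ on $S_r$ given the past of $W$ then yields $\P(W_{t+1}=s\mid W_0=w_0,\ldots,W_t=r)=\lambda_{r,s}$, depending only on $(r,s)$ and neither on the earlier history nor on $t$. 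Hence $W$ is a time-homogeneous Markov chain with transition matrix $\Lambda=(\lambda_{r,s})$, whatever the initial law of $X_0$.

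For the necessity direction (lumpable $\Rightarrow\eqref{eq:lump2}$), I would exploit the freedom in the starting distribution. Suppose $W$ is Markov with transition matrix $\what\Lambda=(\what\lambda_{r,s})$. For each fixed $i\in S_r$, start $X$ deterministically from $i$, so that $W_0=r$ almost surely; then $\what\lambda_{r,s}=\P(W_1=s\mid W_0=r)=\P(W_1=s\mid X_0=i)=\sum_{j\in S_s}p_{i,j}=\lambda_{r,s}^{(i)}$. Since the left-hand side does not depend on the choice of $i\in S_r$, neither does $\lambda_{r,s}^{(i)}$, which is precisely \eqref{eq:lump2}. No reachability issue arises, since $\delta_i$ is an admissible initial law for every state $i$.

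The computations themselves are routine conditional-probability manipulations, so I expect the main obstacle to be conceptual rather than technical: making precise the quantification over initial distributions. In the sufficiency direction one must check that the conditional average of $\lambda_{r,s}^{(i)}$ over $S_r$ collapses to a single constant \emph{uniformly} over all attainable histories, and dually, in the necessity direction, that enough single-point initial laws are admissible to isolate each $\lambda_{r,s}^{(i)}$ individually. Both are resolved by the reading that lumpability means the Markov property of $W$ holds for every initial distribution of $X$ with a common transition matrix; with this interpretation the two implications are short, and no positivity, aperiodicity, or irreducibility assumptions on $P$ are needed.
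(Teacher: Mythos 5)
The paper gives no proof of this statement: Theorem \ref{thm:KS} is quoted from Kemeny and Snell \cite{KS}, and the text proceeds immediately to the definition of an $\sS$-lumpable stochastic matrix. So there is no internal argument to compare against; what can be said is that your proof is correct, and it is essentially the classical argument of \cite{KS}. Your sufficiency step is sound: conditional on the whole past of $W$ and on $X_t=i$ with $i\in S_r$, the Markov property of $X$ gives $\P(W_{t+1}=s\mid X_t=i,\,W_0,\dots,W_t)=\lambda_{r,s}^{(i)}$, and averaging over the (unknown and history-dependent) conditional law of $X_t$ on $S_r$ collapses to the single value $\lambda_{r,s}$ precisely because the summands do not depend on $i$; hence $W$ is Markov with transition matrix $\Lambda=(\lambda_{r,s})$ for any initial law. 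Your necessity step, starting the chain deterministically at each $i\in S_r$ and reading off $\lambda_{r,s}^{(i)}$ as a transition probability of $W$, is also the standard one. Most importantly, you isolated the one genuine subtlety: the equivalence holds only under the Kemeny--Snell reading that the lumped process is Markov, with one common transition matrix, for \emph{every} initial distribution of $X_0$. The paper's definition via \eqref{eq:deflump} leaves this quantifier implicit; if lumpability were read relative to a single fixed initial law, the necessity direction would fail (this is the phenomenon of weak lumpability), so your insistence on that quantifier is exactly what makes both implications go through.
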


A stochastic matrix $P\in \mc{P}_S$ is called
$\mc{S}$-lumpable if \eqref{eq:lump2} holds (where the $\lambda_{r,s}^{(i)}$ are given 
in \eqref{eq:lump1}).  For such a pair $P$ and $\mc{S}$, 
let $\La$ be the $\ell\times\ell$ matrix $(\lambda_{r,s}: 1\le r,s\le \ell)$.
This $\La$ is the transition matrix of the \lq block process' $W$ of \eqref{eq:deflump}.

The evolution of an $\sS$-lumpable chain $X$ may be given in two stages. 
Suppose $X_t=i\in S_r$; then $X_{t+1}$ is given as follows.
\begin{letlist}
\item Select a random block $B$ with mass function $\P(B=S_s)=\lambda_{r,s}$. 
\item Conditional on $B=S_s$, choose $X_{t+1}$ with mass function
$$
\P(X_{t+1}=j\mid B=S_s)= p_{i,j}/\lambda_{r,s}.
$$ 
\end{letlist}

We address a certain sub-category of lumpable chains here, namely those for which $\La$
is doubly stochastic. By Theorem \ref{birk},
such $\La$ may be expressed as a convex combination of permutation matrices.
	
Recall from \eqref{eq:5} the set $M(\sF_S)$ of probability measures on $\sF_S$.

\begin{definition}[\cite{ncftp}]\label{def:block}
Let  $\mu\in M(\sF_S)$. For a partition 
$\sS = \{S_r: r\in I\}$ of $S$ with index set $I=\{1,2,\dots,\ell\}$, we call $\mu$ 
an \emph{$\sS$-block measure} if
\begin{letlist}
\item
	for $f \in \supp(\mu)$, 
	there exists a unique permutation $\pi=\pi_f$ of $I$
	such that, for $r\in I$,  $fS_r \subseteq S_{\pi(r)}$, and
\item $k(\mu)=\ell$.
\end{letlist}
An $\sS$-block measure $\mu$ is said to be \emph{trivial} if $\mc{S}$ is trivial.
%either $\sS=\{S\}$ or $\sS=\{\{s\}: s\in S\}$. 
A measure $\mu$ is
called a \emph{block measure} if it is an $\sS$-block measure for some partition $\sS$.\END
\end{definition}

Since any two states in distinct blocks cannot coalesce under a block measure, the condition $k(\mu)=\ell$ implies that 
\begin{equation}\label{new100}
\text{for $r\in I$ and $i,j\in S_r$, the pair $i$, $j$ coalesce a.s.,}
\end{equation}
so that the coalescence classes of the coalescence relation $\sim$ are a.s.\ the blocks
$S_1,S_2,\dots,S_\ell$.
%\grg{[REMOVED: Since $P$ is irreducible, so is the \lq block-transition matrix' $\La = (\lambda_{r,s})$.]}

By combining the definitions of lumpability and block measures we arrive at a necessary
condition for $\mu$ to be an $\sS$-block measure. The proof may be found in \cite{ncftp}.

\begin{theorem}[\mbox{\cite[Thm 6]{ncftp}}]\label{thm:7}
Let $S$ be a non-empty, finite set, let $P\in\sP_S$, and let $\sS = \{S_r: r\in I\}$  be  a partition of $S$
with index set
$I=\{1,2,\dots,\ell\}$. 
For $i\in S_r$, let
\begin{equation}\label{new:39+}
\lambda_{r,s}^{(i)}= \sum_{j \in S_s} p_{i,j}.
\end{equation}
If there exists an $\sS$-block measure $\mu\in \sL_P$, then
\begin{letlist}
\item
for $r,s\in I$, 
\begin{equation}\label{new:40-}
\text{$\lambda_{r,s} := \lambda_{r,s}^{(i)}$ is constant for $i\in S_r$},
\end{equation}

\item
the \lq block-transition matrix'  $\La=(\lambda_{r,s}: 1\le r,s\le \ell)$ is irreducible. Moreover, 
\begin{equation}\label{eq:ds}
\text{$\La$ is doubly stochastic,}
\end{equation}
which may be expressed as
\begin{equation}\label{new:43}
\sum_{i\in S}\sum_{j \in S_s} \frac1{|S_{r(i)}|}p_{i,j}=1, \qq r,s\in I,
\end{equation} 
where $r(i)$ is the index $r\in I$ such that $i \in S_r$.
\end{letlist}
\end{theorem}

Whether or not $\mu\in M(\sF_S)$ is a block measure turns out to depend on 
whether or not its coalescing classes are deterministic, that is,
a.s.~constant.
(Recall the definition of coalescing class after Lemma \ref{lem:eq}.)  We
state this as a theorem.

\begin{theorem}\label{thm:coal}
A probability measure $\mu\in M(\sF_S)$ is a block measure if and only
if its coalescing classes $\sC=\{C_1,C_2,\dots,C_\ell\}$ are a.s.~constant.   
If the last holds, $\mu$ is
a $\sC$-block measure.
\end{theorem}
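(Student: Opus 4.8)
The plan is to check the two clauses of Definition~\ref{def:block} directly, using as a single tool the behaviour of the coalescence relation $\sim$ under conditioning on the first map $F_1$. Write $F=(F_t:t\in\N)$ for the i.i.d.\ sequence with law $\mu$ and $\gvec F_t=F_t\circ\cdots\circ F_1$ as in \eqref{eq:defvF}, and fix $f\in\supp(\mu)$, so that $\P(F_1=f)=\alpha_f>0$. On the event $\{F_1=f\}$ one has $\gvec F_t(i)=(F_t\circ\cdots\circ F_2)(f(i))$ for all $t$ and $i$; since $(F_2,F_3,\dots)$ is i.i.d.\ with law $\mu$ and independent of $F_1$, it follows that
\begin{equation}\label{eq:shift}
\P(i\sim j\mid F_1=f)=\P\bigl(f(i)\sim f(j)\bigr),\qquad i,j\in S.
\end{equation}
This shift identity is the engine of both directions.

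\emph{Block measure $\Rightarrow$ deterministic coalescing classes.} Suppose $\mu$ is an $\sS$-block measure for $\sS=\{S_r:r\in I\}$. By clause~(a), every $f\in\supp(\mu)$ permutes the blocks, so a.s.\ each $F_t$ does, and hence so does $\gvec F_t$; thus $\gvec F_t(i)$ and $\gvec F_t(j)$ lie in a common block if and only if $i,j$ do. In particular, states in distinct blocks never coalesce, so each coalescing class is a.s.\ contained in a single block. As there are $\ell$ non-empty blocks and, by clause~(b), exactly $k(\mu)=\ell$ coalescing classes, each sitting inside one block, a counting argument forces each block to consist of a single coalescing class. Hence the coalescing classes are a.s.\ equal to the deterministic blocks $S_1,\dots,S_\ell$; this is the observation already recorded at \eqref{new100}.

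\emph{Deterministic coalescing classes $\Rightarrow$ block measure.} Suppose the coalescing classes $\sC=\{C_1,\dots,C_\ell\}$ are a.s.\ constant, so that $\P(i\sim j)=1$ when $i,j$ lie in a common class and $\P(i\sim j)=0$ otherwise. Clause~(b) holds trivially, since $k(\mu)=\ell=|\sC|$. For clause~(a), fix $f\in\supp(\mu)$ and first show $f$ maps each class into a single class. If $i,j\in C_r$ then $\P(i\sim j)=1$, whence $\P(i\sim j\mid F_1=f)=1$ (a probability-one event conditioned on a positive-probability event); by \eqref{eq:shift}, $\P(f(i)\sim f(j))=1$, so $f(i)$ and $f(j)$ share a class. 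Thus $fC_r$ meets exactly one class, defining $\pi_f(r)$ by $fC_r\subseteq C_{\pi_f(r)}$, uniquely since $C_r\ne\es$ and the classes are disjoint. To see $\pi_f$ is a permutation, suppose $\pi_f(r)=\pi_f(s)$ with $r\ne s$ and pick $i\in C_r$, $j\in C_s$; then $f(i),f(j)$ share a class, so $\P(f(i)\sim f(j))=1$, and \eqref{eq:shift} gives $\P(i\sim j)\ge\P(i\sim j\mid F_1=f)\,\alpha_f=\alpha_f>0$, contradicting $\P(i\sim j)=0$ for $i,j$ in distinct classes. Hence $\pi_f$ is injective, so a permutation of the finite set $I$, and clause~(a) holds. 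Therefore $\mu$ is a $\sC$-block measure.

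The main obstacle is the correct handling of the interplay between almost-sure coalescence statements and statements conditional on the positive-probability event $\{F_1=f\}$; the crux is the shift identity \eqref{eq:shift}, which reduces the question \lq does fixing the first step change who coalesces with whom?\rq{} to the unconditioned coalescence of the images under $f$. Once \eqref{eq:shift} is in hand, both the non-splitting of a class ($i\sim j$ a.s.\ $\Rightarrow f(i)\sim f(j)$ a.s.) and the non-merging of distinct classes (its contrapositive, using $\alpha_f>0$) are immediate, and the permutation property is then forced by finiteness of $I$.
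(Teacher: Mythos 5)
Your proof is correct and takes essentially the same route as the paper's: the forward direction is the counting observation already recorded at \eqref{new100}, and the converse verifies Definition \ref{def:block} directly by showing that each $f\in\supp(\mu)$ maps coalescing classes into coalescing classes injectively, hence permutes them. The only difference is expository: your shift identity makes explicit the conditioning-on-$F_1$ argument that the paper compresses into the sentences \lq since $i_1$ and $i_2$ a.s.\ coalesce, $f(i_1)$ and $f(i_2)$ a.s.\ coalesce' and its counterpart for states in distinct classes.
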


\begin{proof}
If $\mu\in M(\sF_S)$ is an $\mc{S}$-block measure then (recall \eqref{new100} and the preceding discussion) the coalescing classes are a.s.~the elements of $\mc{S}$.

Conversely, let the coalescing classes be $C_1,\dots, C_\ell$, and assume they are a.s.~constant.  We claim
that Definition \ref{def:block} is satisfied with $\sS=\sC$.
  Clearly  $k(\mu)=\ell$, so (b) of Definition \ref{def:block} holds.  
  Let $f\in \supp(\mu)$, and let $i_1,i_2\in C_j$.  Since $i_1$ and $i_2$ a.s.~coalesce, $f(i_1)$ and $f(i_2)$ a.s.~coalesce, so $f(i_1)\in C_s\iff f(i_2)\in C_s$.  
Suppose instead that $i_1\in C_{j_1}$ and $i_2\in C_{j_2}$ where $j_1\ne j_2$.  This 
implies that $i_1$ and $i_2$ cannot coalesce, and hence 
$f(i_1)\in C_s$ implies $f(i_2)\notin C_s$.  

We have shown that $f$ permutes classes, which  verifies Definition \ref{def:block}(a) with $\sS=\sC$, 
and hence completes the proof.
\end{proof}

\begin{example}
\label{ex:sub_blocks}
Here is an illustration of Theorem \ref{thm:coal}. Consider the transition matrix 
\[P=\begin{pmatrix}
\hlf& 0 & \hlf& 0\\
0 & \hlf& 0 & \hlf\\
0 & \hlf & \hlf & 0\\
\hlf & 0 & 0 & \hlf
\end{pmatrix},\]
and note that it is irreducible and aperiodic.  
\begin{letlist}
\item
Let $\mu_f$ put mass  $\hlf$ on each of the two permutations
$$
f_1=(1234), \qq f_2= (3421).
$$
(Recall Definition \ref{def:fn}.)
It is easily checked that $\mu_f\in\sL_P$ is a (trivial) block measure  with blocks
$\{1\}$, $ \{2\}$, $\{3\}$, $\{4\}$, so that $k(\mu_f)=4$. 

\item
Let $\mu_g$ put mass $\frac12$ on each of the two functions
$$
g_1= (3434), \qq g_2=(1221).
$$
Then $\mu_g\in\sL_P$. Under $\mu_g$ either: $1$ and $3$ coalesce, \emph{and} $2$ and $4$ coalesce 
\emph{and there is no other coalescence} (this happens if $g_1$ is applied first) or: $2$ and $3$ coalesce, 
\emph{and} $1$ and $4$ coalesce \emph{and there is no other coalescence} (this happens if $g_2$ is applied first).  
Therefore, $k(\mu_g)=2$, but the coalescence sets of $\mu_g$ are not a.s.~constant. By
Theorem \ref{thm:coal},
$\mu_g$ is not a block measure.
This case contains the essence of Example \ref{ex:7}.\END
\end{letlist}
\end{example}

\section{Existence of block measures}\label{sec:exist}
This section is concerned with the existence of block measures.   The main question of interest is, for what 
state space $S$, partition $\sS$, and matrix 
$P\in \mc{P}_S$ does there exist an $\mc{S}$-block measure $\mu \in \mc{L}_P$?  
Let $\mc{C}=\mc{C}(\mu)$ denote the set of coalescing classes of $\mu$.  
In general $\mc{C}(\mu)$ is random, and the support of $\mc{C}(\mu)$ depends only on $\supp(\mu)$.   
By Theorem \ref{thm:coal}, finding an $\mc{S}$-block measure $\mu \in \mc{L}_P$  is equivalent to finding
 $\mu\in \mc{L}_P$ for which $\mc{C}=\mc{S}$ a.s.

By Theorem \ref{lem:8}, 
for every $P\in \sP_S$ there exists a partition $\sS$ of $S$ (comprising the cyclic classes
of $P$) such that the independence coupling $\mind$ is an
$\sS$-block measure.   
Recall the probability measure $\Leb$ in Remark \ref{rem:Q}.

\begin{lemma}\label{lem:10}
For $\Leb$-a.e.\  $P\in\sP_S$,
every block measure is trivial.  
\end{lemma}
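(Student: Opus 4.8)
The plan is to combine the necessary conditions of Theorem~\ref{thm:7} with the absolute continuity of the law $\Leb$, reducing the existence of a nontrivial block measure to a nontrivial polynomial identity in the underlying variables $(q_{i,j})$ and then showing that this identity holds with probability zero. Throughout, write $n=|S|$ and recall from Remark~\ref{rem:Q} that $p_{i,j}=q_{i,j}/Q_i$ with $Q_i=\sum_j q_{i,j}$, the $q_{i,j}$ being independent and uniform on $(0,1)$. Since $P$ is a deterministic function of $(q_{i,j})$, it suffices to bound the probability of the corresponding event in the cube $(0,1)^{n\times n}$, on which the law of $(q_{i,j})$ is Lebesgue measure.

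First I would reduce to lumpability. A partition $\sS=\{S_1,\dots,S_\ell\}$ of $S$ is nontrivial exactly when $2\le \ell\le n-1$, and in that case the pigeonhole principle forces some block, say $S_r$, to satisfy $|S_r|\ge 2$; fix two distinct $i,i'\in S_r$. By Theorem~\ref{thm:7}(a), if $\sL_P$ contains an $\sS$-block measure then $P$ is $\sS$-lumpable, so in particular, for every block $S_s$,
\begin{equation*}
\sum_{j\in S_s}p_{i,j}=\sum_{j\in S_s}p_{i',j}.
\end{equation*}
In terms of the $q$-variables this is the polynomial equation $g_{\sS}(q)=0$, where
\begin{equation*}
g_{\sS}(q):=Q_{i'}\sum_{j\in S_s}q_{i,j}-Q_i\sum_{j\in S_s}q_{i',j}.
\end{equation*}
Hence the event that $P$ admits some nontrivial block measure is contained in the finite union, over all nontrivial partitions $\sS$ (with one triple $s,i,i'$ chosen per $\sS$), of the zero sets $\{g_{\sS}=0\}$.

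The crux is to check that each $g_{\sS}$ is not the identically-zero polynomial. Because $\ell\ge 2$, the block $S_s$ is a proper nonempty subset of $S$, and $g_{\sS}$ involves only the (disjoint) variable sets of rows $i$ and $i'$. A direct substitution---for instance $q_{i,j}\equiv\tfrac12$, together with $q_{i',j}=\tfrac14$ for $j\in S_s$ and $q_{i',j}=\tfrac12$ otherwise---yields $g_{\sS}=\tfrac18|S_s|(n-|S_s|)\ne 0$, since $0<|S_s|<n$. Thus each $\{g_{\sS}=0\}$ is a proper algebraic subset of $(0,1)^{n\times n}$ and so has Lebesgue measure zero; a finite union of such sets is again null. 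Consequently, for $\Leb$-a.e.\ $P$ there is no nontrivial partition $\sS$ for which $P$ is $\sS$-lumpable, so by Theorem~\ref{thm:7}(a) no nontrivial $\sS$-block measure exists, i.e.\ every block measure is trivial.

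The main obstacle is precisely this nonvanishing check: one must ensure the lumpability constraint is a genuine (codimension $\ge 1$) condition rather than one automatically satisfied by every stochastic matrix. The exhibited substitution settles this, and normalising by $Q_i$, $Q_{i'}$ is harmless because these are a.s.\ strictly positive. Alternatively, Theorem~\ref{thm:7}(b) supplies a second measure-zero constraint (double stochasticity of $\La$) that could be used in place of, or in addition to, the lumpability condition.
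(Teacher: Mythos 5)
Your proof is correct and follows essentially the same route as the paper: both reduce, via Theorem~\ref{thm:7}(a), the existence of a non-trivial block measure to a lumpability equality between rows lying in a common block of size at least $2$, and then show this equality is a $\Leb$-null event, finishing with a union bound over the finitely many non-trivial partitions. The only cosmetic difference is in the null-set step: the paper keeps the constraint in the normalised entries and invokes independence and absolute continuity of the row sums $Z_i=\sum_{j\in T}p_{i,j}$, whereas you clear the denominators $Q_i,Q_{i'}$ and use the fact that a not-identically-zero polynomial (verified by your explicit substitution) has Lebesgue-null zero set.
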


\begin{proof}
We need only consider cases with $|S|\ge 3$.
If there exists a non-trivial block measure, then there exists $T\subseteq S$ 
such that $2\le |T| \le |S|-1$ and, by \eqref{new:39+}--\eqref{new:40-},
$$
\sum_{j\in T} p_{i,j} \text{ is constant for } i\in T.
$$
Let $Z_i:=\sum_{j\in T} p_{i,j}$. Then $Z_1,Z_2,\dots$ are independent with a common
absolutely continuous distribution.  Therefore, $\Leb(Z_i=Z_k)=0$ for all distinct pairs $i$, $k$,
 whence the set of such $P$ has $\Leb$-measure $0$.
\end{proof}

By Lemma \ref{lem:10}, for every $S$, for $\Leb$-a.s.\ $P\in \mc{P}_S$, and for every non-trivial $\mc{S}$,
 there exists no $\sS$-block measure. In contrast, the following holds.

\begin{theorem}
For $S=\{1,2,\dots,n\}$ 
and any partition $\mc{S}$ of $S$ there exists $\mu\in M(\sF_S)$ such that $\mu$ is an $\mc{S}$-block measure.
\end{theorem}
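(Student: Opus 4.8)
The plan is to apply Theorem~\ref{thm:coal}, which reduces the statement to the construction of a single measure $\mu\in M(\sF_S)$ whose coalescing classes $\sC$ are almost surely constant and equal to the given partition $\sS=\{S_1,\dots,S_\ell\}$; any such $\mu$ is then automatically an $\sS$-block measure. Concretely, writing $F_1,F_2,\dots$ for i.i.d.\ random functions with law $\mu$ and $X^i_t=\gvec F_t(i)$ for the resulting coupled trajectories, I must arrange that (i) two states lying in distinct blocks never coalesce, and (ii) two states lying in a common block coalesce almost surely.

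First I would specify the law $\mu$ of a single step by \emph{sharing} the block-level move across all blocks while keeping the within-block choices independent. Let $C$ be the cyclic permutation $r\mapsto r+1$ of the index set $\{1,\dots,\ell\}$ (indices modulo $\ell$). To sample a step $F$, first draw a single global block-permutation $\Pi$, equal to $\mathrm{id}$ or to $C$ with probability $\hlf$ each; then, conditionally on $\Pi$, draw the values $(F(s):s\in S)$ independently, with $F(s)$ uniform on $S_{\Pi(r)}$ whenever $s\in S_r$. This defines a finitely supported $\mu\in M(\sF_S)$, and each $F_t$ carries its own block-permutation $\Pi_t$. Every $f\in\supp(\mu)$ satisfies $fS_r\subseteq S_{\Pi(r)}$ for all $r$, so $f$ permutes the blocks; and since each nonempty image $fS_r$ meets exactly one block, the associated permutation $\pi_f=\Pi$ is unique. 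This is precisely condition (a) of Definition~\ref{def:block}. The induced matrix has $p_{i,j}=\tfrac1{2|S_r|}$ for $i,j\in S_r$ and $p_{i,j}=\tfrac1{2|S_{r+1}|}$ for $i\in S_r,\ j\in S_{r+1}$, and is therefore irreducible and aperiodic, so $\mu$ is a genuine grand coupling of an irreducible aperiodic chain, as befits the paper's setting.

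Next I would verify (ii), and with it condition (b). Because $\Pi_t$ is common to all blocks at each step, two states $i,j\in S_r$ occupy the same block $S_{\Pi_t\circ\cdots\circ\Pi_1(r)}$ at every time $t$, while states starting in distinct blocks stay forever in distinct blocks (the composite block-permutation is a bijection); this gives (i) and shows each coalescing class lies inside one block. For the reverse inclusion, if $X^i_t,X^j_t$ lie in a common block $B$ and are still distinct, then, writing $B'=\Pi_{t+1}(B)$, the images $F_{t+1}(X^i_t)$ and $F_{t+1}(X^j_t)$ are conditionally independent and uniform on $B'$, hence coincide with probability $1/|B'|\ge 1/n$; this lower bound is uniform in the history, so the two trajectories almost surely coincide at some finite time and then agree forever. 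Thus each block is a single coalescing class, $k(\mu)=\ell$, and Theorem~\ref{thm:coal} identifies $\mu$ as the required $\sS$-block measure. (Alternatively one may invoke the Doeblin-type Theorem~\ref{thm:indep_copies_aperiodic}.)

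The step I expect to be the main obstacle is the compatibility of conditions (a) and (b). Coalescence inside a block is driven by the \emph{independence} of the within-block images, which tempts one towards the independence coupling $\mind$; but under $\mind$ each block chooses its block-level move independently, so a typical $f\in\supp(\mind)$ sends different blocks along different block-permutations and fails to permute the blocks, destroying (a). The resolution is precisely to share one block-permutation $\Pi_t$ across all blocks at each step while retaining independence within blocks. I would also note that no hypothesis on the block sizes is needed, since (a) only requires $fS_r\subseteq S_{\pi_f(r)}$ rather than a bijection between blocks, the size discrepancies being absorbed by non-injective within-block maps. Finally, if irreducibility of $P$ is not insisted upon, the argument collapses to the one-line choice $\mu=\delta_g$, where $g$ collapses each block onto a fixed representative: within-block coalescence is then immediate at time $1$ and distinct blocks never meet.
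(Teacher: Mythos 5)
Your proof is correct, but it takes a genuinely different route from the paper's. The paper takes $\supp(\mu)$ to be the set $\what\sF$ of \emph{all} functions that are constant on each block and permute the blocks: since every such function collapses each $S_r$ to a single point, within-block coalescence is deterministic and complete after one time step, so no probabilistic argument is needed at all; moreover, taking the support to be all of $\what\sF$ makes every entry of the induced $P$ strictly positive, hence $P$ irreducible and aperiodic. Your construction instead shares a single random block permutation $\Pi_t$ (identity or cyclic shift, each with probability $\hlf$) across all blocks and, conditionally on it, maps states independently and uniformly into the image block; this is precisely a $(P,\sS,\rho)$-product measure in the sense of Section~\ref{sec:exist}, and your geometric-trials argument for within-block coalescence (a conditional lower bound of $1/n$ per step, uniform in the history) is a special case of the Doeblin-type mechanism behind Theorem~\ref{prop:4} and Example~\ref{ex10}. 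What the paper's approach buys is economy: coalescence is trivial and immediate, and one never needs an estimate. What yours buys is a support containing functions that are injective on blocks, together with a direct illustration of the product-measure machinery that the paper only develops afterwards. It is worth noting that your closing remark --- that if irreducibility is not insisted upon one may simply take $\mu=\delta_g$ with $g$ collapsing each block to a fixed representative --- is in fact the germ of the paper's proof: the paper keeps exactly this collapsing idea but enlarges the support to all collapsing, block-permuting functions, thereby retaining one-step coalescence \emph{and} securing irreducibility of the induced $P$.
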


\begin{proof}
Fix $S$ and $\mc{S}=\{S_r: r\in I\}$.  Consider the set $\what\sF$ of functions $f$  such that 
\begin{romlist}
\item for all $r$, $f(i)=f(j)$ for every $i,j\in S_r$, and 
\item there exists a permutation $\pi=\pi_f$ of $I$
	such that, for $r\in I$,  $fS_r \subseteq S_{\pi(r)}$. 
\end{romlist}
For example, if $\mc{S}=\{\{1,2\},\{3,4,5\}\}$ then $\what\sF$ consists of the 12 functions denoted 
\begin{equation*}
\begin{aligned}
(11333), (11444),(11555),(22333), (22444),(22555),\\
(33111), (44111), (55111), (33222), (44222), (55222),
\end{aligned}
\end{equation*}
in the notation of Definition \ref{def:fn}.

Let $\mu$ be a probability measure with $\supp(\mu)=\what\sF$.  
By assumption (i), for all $r\in I$ and
$i,j\in S_r$, $i$ and $j$ coalesce in one step.
The claim follows by assumption (ii) and Definition \ref{def:block}. 
\end{proof}

Henceforth, for certain $P$, $\mc{S}$, we will present a natural measure
 $\mu\in \mc{L}_P$, and then determine conditions under which it is an $\mc{S}$-block measure.

Let the pair $P\in \mc{P}_S$ and $\sS=\{S_r: r\in I\}$ 
(a partition
of the state space $S$ with $I=\{1,2,\dots,\ell\}$) satisfy
\eqref{new:40-} and \eqref{eq:ds}.
By Theorem \ref{thm:7} these two conditions
are necessary for the existence of an $\mc{S}$-block measure $\mu\in \mc{L}_P$.

Since $P$ is assumed irreducible, the stochastic matrix $\La$ is irreducible also.  
By \blue{\eqref{eq:ds}} and Theorem \ref{birk}, we may find a measure $\rho\in\sL_\La$
supported on a subset of the set of permutations of $I$, and we let $\Pi$ be a random permutation with law $\rho$. 
(Note that $\rho$ is not generally unique.)
Let $i \in S_r$ and $j \in S_s$.  In order for our forthcoming $\mu$ to be consistent with $P$ (recall  Definition \ref{eq:4}) we need that
\begin{align}
\sum_{f: i\mapsto j} \mu(\{f\})  = p_{i,j}.\label{want}
\end{align}
In order for $i$ to be mapped to $j$, it is necessary that $\Pi(r)=s$; the last occurs with 
probability $\lambda_{r,s}$.  
 Conditional on $\Pi$, we shall then map the states independently in such a way as to obtain \eqref{want}.    

More precisely,
conditional on $\Pi$, let  $Z=(Z_i: i \in S)$ be independent random variables such that
\begin{equation}\label{new6}
\P(Z_i=j\mid \Pi) =
\begin{cases} p_{i,j}/\lambda_{r,s} &\text{if } S_r \ni i,\ S_s \ni j,\  \Pi(r)=s,\\
0 &\text{otherwise},
\end{cases}
\end{equation}
and let $\mu$ be the law of $Z$. Thus, $\mu\in M(\sF_S)$ is given by
\begin{equation}\label{new7}
\mu(\{f\}) = \E\left[\prod_{i\in S} \P(Z_i=f(i)\mid\Pi)\right], 
\end{equation}
where the expectation is over the random permutation $\Pi$. 
We call $\mu$ the
$(P,\sS,\rho)$-\emph{product measure}.

Next we check that $\mu\in \mc{L}_P$. 
%is consistent with $P$ (recall  Definition \ref{eq:4}). 
Let $i\in S_r$ and $j\in S_s$. By \eqref{new6}--\eqref{new7},
$$
\sum_{f: i\mapsto j} \mu(\{f\}) =\P(Z_i=j)=\lam_{r,s} \cdot \dfrac{p_{i,j}}{\lam_{r,s}} = p_{i,j},
$$
as required. By the definition of $\mu$, no two states in different sets of the partition $\sS$ may coalesce.  
Thus, to determine whether the $(P,\sS,\rho)$-product measure $\mu\in \mc{L}_P$ is an $\mc{S}$-block measure  
it remains to check whether $k(\mu)=\ell=|\mc{S}|$, which is to say that, for all $r$, all states in $S_r$ coalesce
(recall \eqref{new100}).

Recall that $\mc{C}$ denotes the (possibly random) set of coalescing classes, 
and that  $i\sim j$ if there exists $C\in \mc{C}$ such that $i,j\in C$.

\begin{theorem}\label{prop:4}
Let $P\in\sP_S$, and let $\sS = \{S_r: r\in I\}$  be  a partition of $S$
with index set
$I=\{1,2,\dots,\ell\}$. 
Assume \eqref{new:40-} and \eqref{eq:ds} hold.
The $(P,\sS,\rho)$-product measure $\mu$ satisfies $k(\mu)=\ell$ if
and only if\/\footnote{One may work with any given value of $r$ in \eqref{new:31++},
and we have chosen $r=1$ for concreteness. 
Condition \eqref{new:31++} is similar to the sufficient condition of \cite[Thm 6]{Tak} for 
the quenched ergodicity of a Markov chain with random transition matrices.
Theorem \ref{prop:4} corrects an error in \cite[Thm 6]{ncftp}.}
\begin{equation}\label{new:31++}
\P(i \sim j)>0, \qq i,j\in S_1.
\end{equation}
\end{theorem}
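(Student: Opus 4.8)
The forward implication $(\Rightarrow)$ is immediate: if $k(\mu)=\ell$ then, since no two states in distinct blocks coalesce, each block $S_r$ must be exactly one coalescence class, so every pair $i,j\in S_1$ coalesces almost surely and in particular $\P(i\sim j)=1>0$. The substance of the theorem is the reverse implication, which asserts that a merely \emph{positive} probability of pairwise coalescence within $S_1$ forces \emph{almost-sure} coalescence of every block. The plan is to upgrade ``positive probability'' to ``almost sure'' within $S_1$ by a recurrence argument, and then to propagate a.s.\ coalescence from $S_1$ to every block.

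First I would record the block structure of the dynamics. By construction of the $(P,\sS,\rho)$-product measure, each $F_t$ carries every block $S_r$ into a single block $S_{\pi_t(r)}$, where $\pi_t$ is a random permutation of $I$ with law $\rho$. Hence $\gvec F_t$ maps $S_r$ into $S_{\sigma_t(r)}$ with $\sigma_t=\pi_t\circ\cdots\circ\pi_1$, and the block index $b_t:=\sigma_t(r)$ is itself a Markov chain with transition matrix $\La$ (using $\rho\in\sL_\La$). Since $\La$ is irreducible on the finite set $I$, this chain is recurrent and visits the index $1$ at infinitely many (a.s.\ finite) times.

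The main step is the claim that $\P(a\sim b)>0$ for all $a,b\in S_1$ already forces $\P(a\sim b)=1$ for all such pairs. Set $\delta=\min\{\P(a\sim b): a,b\in S_1,\ a\ne b\}$, which is strictly positive as a finite minimum of positive numbers. Fix $a,b\in S_1$ and track the pair $Y_t=(\gvec F_t(a),\gvec F_t(b))$; note that $Y$ is a Markov chain, that the ``diagonal'' is absorbing (functions preserve equalities), and that both coordinates of $Y_t$ always lie in the common block $S_{b_t}$. Let $\tau_1<\tau_2<\cdots$ enumerate the times $t$ with $b_t=1$, which by the previous paragraph are a.s.\ all finite, and set $\mathcal{G}_{\tau_n}=\sigma(F_1,\dots,F_{\tau_n})$. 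On the event $A=\{a\not\sim b\}$ each $Y_{\tau_n}$ is a non-diagonal pair in $S_1\times S_1$, so the strong Markov property and the definition of $\delta$ give $\P(A\mid \mathcal{G}_{\tau_n})\le 1-\delta$ on $A$. Letting $n\to\infty$ and applying L\'evy's upward theorem (valid since $A$ is measurable with respect to $\bigvee_n\mathcal{G}_{\tau_n}$) yields $\P(A\mid \mathcal{G}_{\tau_n})\to \indic{A}$ a.s.; on $A$ the limit is $1$, contradicting the bound $1-\delta<1$ unless $\P(A)=0$. Hence $a$ and $b$ coalesce almost surely.

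Finally I would propagate this to arbitrary blocks. For $i,j\in S_r$ with $r$ arbitrary, the block index $b_t$ reaches $1$ at some a.s.\ finite time $\tau$ by recurrence; at that moment $Y_\tau=(\gvec F_\tau(i),\gvec F_\tau(j))$ is a pair in $S_1\times S_1$, which either has already coalesced or, by the strong Markov property together with the previous paragraph, coalesces almost surely thereafter. Thus every pair of states in every block coalesces a.s., each block is a single coalescence class, and $k(\mu)=\ell$. The crux of the whole argument is the zero--one upgrade in the third paragraph, where recurrence of the $\La$-chain supplies infinitely many opportunities for coalescence within $S_1$ and the uniform lower bound $\delta$ converts positivity into certainty; the reduction of everything to the single block $S_1$ (justifying the footnote's remark that any index may be used) is exactly the content of the final paragraph.
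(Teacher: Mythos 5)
Your proposal is correct, and its architecture coincides with the paper's own proof: the forward implication is the trivial one (the paper states it contrapositively, namely that if some pair $i,j\in S_1$ has $\P(i\sim j)=0$ then $k(\mu)\ge \ell+1$), and for the substantive reverse implication both arguments use irreducibility of $\La$ to guarantee infinitely many returns of the block-index chain to $S_1$, a uniform positive lower bound over the finitely many pairs in $S_1$, and the same final propagation step (wait until $S_r$ is mapped into $S_1$, then invoke the $r=1$ case). The only genuine difference is the mechanism that upgrades ``positive chance per return'' to ``almost sure'': the paper extracts a uniform pair $(\eps,M)$ with $\P(T_{i,j}\le M)>\eps$ and runs an explicit geometric-trials iteration, bounding the failure probability by $(1-\eps)^n$ after $n$ returns, whereas you keep the unbounded-horizon bound $\delta=\min\{\P(a\sim b):a,b\in S_1\}$ and apply L\'evy's upward theorem to the conditional probabilities $\P(a\not\sim b\mid \mc{G}_{\tau_n})$ at the return times. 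Your martingale route buys a small amount of cleanliness: it needs no finite horizon $M$ and avoids the bookkeeping with the random inter-return times $K$ that the paper's iteration treats somewhat informally; the paper's version is more elementary, using nothing beyond conditional probability and a limit of $(1-\eps)^n$.
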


\begin{proof}
Suppose the conditions of the theorem hold,
and also \eqref{new:31++}. 
Let $r\in I$. We will show that
\begin{equation}\label{new:43a}
\P(i\sim j)=1, \qq i,j\in S_r.
\end{equation}
The claim $k(\mu)=\ell$ follows since there are finitely many pairs $i,j\in S_r$ and indices $r$.

Recall from \eqref{eq:co} that
$T_{i,j}=\inf\{t\ge 0: X_t^i=X_t^j\}$,
so that $i\sim j$ if and only if $T_{i,j}<\oo$.

We first prove \eqref{new:43a} with $r=1$.
Since $|S_1|<\oo$, by \eqref{new:31++} there exists $\eps>0$,  and $M<\oo$ such that
\begin{equation}\label{new:32+}
\P(T_{i,j} \le M)>\eps, \qq i,j\in S_1.
\end{equation}

Let $i,j\in S_1$. Either $T_{i,j} \le M$ or not. 
Assume that $T_{i,j}>M$. We continue the two chains from time $M$ with initial states $X_M^i$ and $X_M^j$
until the next epoch ($M+K$, say) 
at which these two processes lie in $S_1$; note that $\P(K<\oo)=1$  
since $\La$  is irreducible.
Having arrived back in $S_1$, we apply the argument above to deduce that coalescence occurs by time $2M+K$ with 
(conditional) probability at least $\eps$.

It follows that
\begin{align*}
\P(T_{i,j} > 2M+K) &=\P(T_{i,j} > 2M+K\mid T_{i,j} > M) \P(T_{i,j}>M)\\
&\le (1-\eps)^2.
\end{align*}
By iteration of this argument, $\P(T_{i,j}<\oo)=1$, and  \eqref{new:43a} follows with $r=1$.  

For $r\ne 1$ and $i,j\in S_r$, since $\Lambda$ is irreducible, a.s.~$S_r$ is eventually mapped to $S_1$.  
At this point, $i$ and $j$ have both been mapped to elements of $S_1$, so (if they have not already done so) 
they will almost surely coalesce by \eqref{new:43a} with $r=1$.  This verifies \eqref{new:43a} 
for general $r$ as required.

Conversely, if there exist 
%$r$ and 
$i,j\in S_1$ such that $\P(i\sim j)=0$ 
%T_{i,j}=\oo)=1$,
then it is necessarily the case that $k(\mu)\ge \ell+1$.
\end{proof}

\begin{example}\label{ex10}
Here is an illustration of Theorem \ref{prop:4}.
As there (with $\ell=2$ for simplicity) we let $P\in\sP_S$, $\sS = \{S_1,S_2\}$, and we assume that 
\eqref{new:40-} and \blue{\eqref{eq:ds}} hold.

%For simplicity we take $\ell=2$, and 
For $r=1,2$ we write $S_r=\{x_{r,j}: 1\le j \le m_r\}$.
Ordering the elements of $S$ as $(x_{1,1}.\dots, x_{1,m_1}, x_{2,1},\dots, x_{2,m_2})$, we may express 
the $(m_1+m_2)\times (m_1+m_2)$ matrix $P$ in the form
$$
P=
\begin{pmatrix} A & B\\ C & D
\end{pmatrix},
$$
where $A$ is an $m_1 \times m_1$ matrix and $D$ is an $m_2\times m_2$ matrix.

The $2\times 2$ matrix $\La$ is doubly stochastic and irreducible, 
and may (by Theorem \ref{birk}) be expressed in the form
\begin{equation}\label{eq:unique}
\La = \begin{pmatrix} \lam_{1,1} & \lam_{1,2} \\ \lam_{2,1} & \lam_{2,2} \end{pmatrix}
= \alpha I + (1-\alpha) \oI
\end{equation}
where $\alpha=\lam_{1,1}=\lam_{2,2}$ and 
$$
I=\begin{pmatrix} 1 & 0\\ 0 & 1\end{pmatrix},
\qq 
\oI=\begin{pmatrix} 0 & 1\\ 1 & 0\end{pmatrix}.
$$ 
In other words, the measure $\rho$ (which is unique since $\Lambda$ is a $2\times 2$ matrix) puts mass $\alpha$ on the identity permutation, and mass $1-\alpha$ on the \lq interchange' permutation.  
We let $(\Pi_i:i \in \N)$ be independent permutations 
with common law $\rho$, and let $\mu$ be the $(P,\mc{S},\rho)$-product measure.

We have shown that
\[\Lambda=\begin{pmatrix}
\alpha & 1-\alpha\\
1-\alpha & \alpha
\end{pmatrix}.\]
Thus, the row sums of $A$ and $D$ (\resp, $B$ and $C$) are all $\alpha$ (\resp, $1-\alpha$) by 
 \eqref{new:39+} and \eqref{new:40-}).
Therefore, $P$ is a mixture of two stochastic matrices $P_1$ and $P_2$,
$$
P =  \alpha P_1 + (1-\alpha) P_2,
$$
where
$$
P_1 = \frac1\alpha \begin{pmatrix} A & 0\\ 0 & D
\end{pmatrix},\qq
P_2 =\frac1{1-\alpha} \begin{pmatrix} 0 & B\\ C & 0
\end{pmatrix}.
$$

\emph{Suppose $\alpha>0$ (we have $\alpha<1$ by irreducibility), and that $A':=A/\alpha$ is irreducible and aperiodic.}
We claim that $k(\mu)=2$. This may be shown as follows.

Let $W=(W_n: n\ge 0)$ be a Markov chain on $S_1$ with transition matrix $A'$, and let $i,j\in S_1$.
By Theorem \ref{thm:indep_copies_aperiodic}, 
%Doeblin's argument 
%a classical argument of Doeblin \cite{Doeblin} (see \cite[p.\ 260]{GS}),
if $W^i$ and $W^j$ are two independent versions of $W$, starting \resp\  at $i$ and $j$, then %meet a.s.\  at some finite time.
%Therefore, 
there exists $M<\oo$ such that 
$$
\P(W_m^i=W_m^j\text { for some }m\le M)  \ge \tfrac12.
$$
It follows that
\begin{align*}
\P(T_{i,j}<\oo) &\ge \P(T_{i,j}<\oo,\ \Pi_m=I\text { for } m=1,2,\dots, M)\\
&\ge\alpha^M \P(W_m^i=W_m^j\text { for some }m\le M)  \ge \tfrac12\alpha^m.
\end{align*}
%wnere $\Pi_m$ denotes the random block-permutation at epoch $m$.
Equation \eqref{new:31++} follows, and hence $k(\mu)=2$ by Theorem \ref{prop:4}.
\END
\end{example}

Finally, in preparation for the next section we state a result from \cite{ncftp} which concerns the transition matrix 
\begin{equation*}
P_n=\begin{pmatrix}
n^{-1} & n^{-1} & \cdots &  n^{-1} \\
n^{-1} & n^{-1} & \cdots & n^{-1} \\
\vdots & \vdots & \ddots & \vdots\\
n^{-1} & n^{-1} & \cdots & n^{-1} 
\end{pmatrix}
\end{equation*}
on the state space $S=\{1,2,\dots,n\}$ with equal entries. 

\begin{theorem}[{\cite[Thm 7]{ncftp}}]
\label{thm:blockPn}
For $n \ge 2$ there exists a block measure $\mu\in\sL(P_n)$ with
$k(\mu)=\ell$ if and only if $\ell \mid n$. In particular,  $K(P_n)\supseteq \{\ell: \ell\mid n\}$.
For $n \ge 3$, we have $n-1\notin K(P_n)$.
\end{theorem}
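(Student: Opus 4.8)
The plan is to prove the three assertions in order, using Theorems~\ref{thm:7} and \ref{prop:4} for the first two and a self-contained argument for the last. For the ``only if'' part of the equivalence, suppose $\mu\in\sL(P_n)$ is an $\sS$-block measure with $k(\mu)=\ell$, so its $\ell$ coalescence classes are the blocks $S_1,\dots,S_\ell$. By Theorem~\ref{thm:7}(b) the matrix $\La$ is doubly stochastic; since $\lambda_{r,s}=|S_s|/n$ for $P_n$, the column sums give $\ell|S_s|/n=1$, so every block has size $n/\ell$ and therefore $\ell\mid n$. For the ``if'' part, assume $\ell\mid n$ and split $S$ into $\ell$ blocks of size $n/\ell$; then $\lambda_{r,s}=1/\ell$ for all $r,s$, so $\La$ is doubly stochastic and irreducible and \eqref{new:40-}--\eqref{new:41a} hold. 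I would form the $(P_n,\sS,\rho)$-product measure of Section~\ref{sec:exist} and apply Theorem~\ref{prop:4}, for which it suffices to check \eqref{new:31++}: conditional on the first block-permutation fixing $S_1$ (probability $\lambda_{1,1}=1/\ell>0$), two states $i,j\in S_1$ are sent to independent uniform elements of $S_1$ and so coincide with probability $1/|S_1|>0$, whence $\P(i\sim j)>0$ and $k(\mu)=\ell$. This proves the equivalence, and $K(P_n)\supseteq\{\ell:\ell\mid n\}$ follows at once.

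The substantive claim is $n-1\notin K(P_n)$ for $n\ge3$. Suppose for contradiction that $\mu\in\sL(P_n)$ has $k(\mu)=n-1$. The number of occupied states $N_t:=|\{\gvec F_t(i):i\in S\}|$ is non-increasing and tends a.s.\ to $n-1$, so a.s.\ there is a finite random time beyond which exactly one state is unoccupied; write $U_t$ for this \emph{hole}. Maintaining $N_{t+1}=n-1$ forces $F_{t+1}$ to be injective on the occupied set $S\setminus\{U_t\}$. Since the $F_t$ are i.i.d.\ with law $\mu$ and $U_t$ is a function of $F_1,\dots,F_t$, independence converts this almost-sure requirement into a pointwise one: for every state $u$ that the hole occupies with positive probability (let $H$ be the set of these), we must have $g(u):=\mu(\{f:f|_{S\setminus\{u\}}\text{ injective}\})=1$.

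I would then argue by cases on $|H|$. If $|H|\ge3$, three relations $g(u)=1$ are incompatible with $f$ having a (necessarily single) collision, so $f$ is a.s.\ a permutation and no coalescence occurs, giving $k(\mu)=n$, contrary to $k(\mu)=n-1$. If $|H|=1$, say $H=\{u_0\}$, the hole is constant, so a.s.\ no occupied state maps to $u_0$, contradicting $\mu(\{f:f(i)=u_0\})=p_{i,u_0}=1/n>0$ for $i\ne u_0$ by \eqref{eq:4}. In the remaining case $|H|=2$, say $H=\{u_1,u_2\}$, the conditions $g(u_1)=g(u_2)=1$ force $\mu$ to be supported on permutations together with functions whose only collision is $\{u_1,u_2\}$. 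Because the hole never leaves $H$, each permutation in the support preserves $\{u_1,u_2\}$ and hence maps $V:=S\setminus\{u_1,u_2\}$ onto itself; and a column-sum computation---using $\sum_i\mu(\{f:f(i)=v\})=1$ for $v\in V$ and the fact that each collision function misses a state of $\{u_1,u_2\}$ (its missed state is the new hole, which lies in $H$)---shows that the collision functions map $V$ bijectively onto $V$ as well. Thus $f(V)=V$ for $\mu$-a.e.\ $f$, so $\mu(\{f:f(v)=u_1\})=0$ for $v\in V$, contradicting $\mu(\{f:f(v)=u_1\})=1/n>0$.

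The main obstacle is the case $|H|=2$, where neither the dynamical input nor the static input suffices alone. One must combine the constraint that the hole stays in $\{u_1,u_2\}$ (which pins down the permutations in $\supp(\mu)$) with the double-stochasticity column sums of $P_n$ (which pin down the collision functions) to produce the forbidden invariant set $V$. I would also take care over the step that upgrades the a.s.\ ``no further coalescence'' requirement to the pointwise identities $g(u)=1$, since this is exactly where the independence of $F_{t+1}$ from the history $F_1,\dots,F_t$ is used.
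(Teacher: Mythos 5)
Your proposal is correct, but a direct comparison with ``the paper's proof'' is not possible: Theorem \ref{thm:blockPn} is quoted here from \cite[Thm 7]{ncftp} without proof, so your argument stands as an independent derivation. For the divisibility equivalence you use this paper's own machinery, and that route is sound: Theorem \ref{thm:7}(b) with $P_n$ gives $\lambda_{r,s}=|S_s|/n$, and the column sums of the doubly stochastic $\La$ force $|S_s|=n/\ell$, hence $\ell\mid n$; conversely, for equal blocks $\La$ is the all-$1/\ell$ matrix, the $(P_n,\sS,\rho)$-product measure exists, and your verification of \eqref{new:31++} (fix $S_1$ with probability $1/\ell$, then two conditionally independent uniform points of $S_1$ collide with probability $\ell/n$) is exactly what Theorem \ref{prop:4} needs. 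Note that this is arguably a cleaner route than was available to \cite{ncftp}, since Theorem \ref{prop:4} is the corrected form of \cite[Thm 6]{ncftp}.

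The substantive third part also checks out, and its key steps are all sound: (i) $N_t$ is non-increasing with a.s.\ limit $k(\mu)=n-1$, so the hole process is eventually well defined; (ii) the independence of $F_{t+1}$ from $\sigma(F_1,\dots,F_t)$ correctly upgrades the a.s.\ injectivity requirement to the support condition $g(u)=1$ for every $u$ in the reachable hole set $H$, and likewise shows $H$ is closed under the hole dynamics; (iii) if $|H|\ge 3$, any collision pair would have to contain three distinct states, so $\supp(\mu)$ consists of permutations and $k(\mu)=n$; (iv) if $|H|=1$ the occupied set $S\sm\{u_0\}$ is invariant, contradicting $p_{i,u_0}=1/n$; (v) if $|H|=\{u_1,u_2\}$, the column-sum identity $\sum_i p_{i,v}=1$ yields $\P(\mathrm{coll},\,c_F=v)=\P(\mathrm{coll},\,m_F=v)$ for every $v$, and since the missed state $m_F$ lies in $H$ a.s.\ and $c_F\ne m_F$ automatically (one is in the image, the other is not), every collision function has $\{c_f,m_f\}=\{u_1,u_2\}$; combined with the permutations preserving $H$, every $f\in\supp(\mu)$ maps $V=S\sm\{u_1,u_2\}$ onto itself, contradicting $p_{v,u_1}=1/n>0$ for the nonempty $V$ (here $n\ge3$ is used). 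The only presentational caution is to fix the definition of $H$ once and for all (states $u$ with $\P(\tau\le t,\,U_t=u)>0$ for some $t$), since both the ``pointwise upgrade'' and the closure property quantify over exactly this set; your sketch does this implicitly and the argument goes through.
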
 

\section{Existence of non-block measures}\label{sec:exist-non}

Many of our measures $\mu\in\sL_P$ so far have been block measures, though we encountered a certain
$\mu$ in Example \ref{ex:7} which (by Theorem \ref{thm:coal}) is not a block measure.  
By Theorem \ref{thm:blockPn}, $n-1\notin K(P_n)$, whence $K(P_n)=\{\ell:\ell\mid n\}$ when $n\le 4$.  
We do not know whether $K(P_n)=\{\ell:\ell\mid n\}$ when $n> 4$.
In this section we shall construct a family of non-block measures for $P_n$.

\begin{theorem}\label{thm:6}%\mbox{\hfill}
Let $n \ge 4$ and suppose $\ell\ne 1$ and $\ell \mid n$.
There exists a non-block measure $\mu\in\sL_{P_n}$ with
$k(\mu)=\ell$.
\end{theorem}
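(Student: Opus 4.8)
The plan is to exploit Theorem \ref{thm:coal}: a measure $\mu \in M(\sF_S)$ is a block measure precisely when its coalescing classes are a.s.\ constant. Hence it suffices to produce $\mu \in \sL_{P_n}$ with $k(\mu) = \ell$ whose coalescing partition is genuinely random, taking at least two values with positive probability. The prototype is Example \ref{ex:7} (equivalently Example \ref{ex:sub_blocks}(b)): there the identity of the coalescing classes is fixed by the first map $F_1$, so two distinct partitions each occur with positive probability while $k$ stays equal to $\ell$. The task is to carry out this idea subject to the rigid constraint that $\mu$ be consistent with the fully symmetric matrix $P_n$, i.e.\ $\sum_{f: f(i)=j}\mu(\{f\}) = 1/n$ for all $i,j$.

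The mechanism I would use is to collapse $S$ to exactly $\ell$ points at the first step and never merge again. Concretely, I seek a finite family $\mathcal F \subseteq \sF_S$ such that (i) every $f \in \mathcal F$ has $|f(S)| = \ell$, and (ii) for all $f, g \in \mathcal F$ the map $g$ is injective on $f(S)$, equivalently $f(S)$ is a transversal of the fibre-partition of $g$. Condition (ii) forces $|\gvec F_t(S)| = \ell$ for all $t \ge 1$, so $k(\mu) = \ell$, and moreover the coalescing partition equals the fibre-partition of $F_1$ exactly (no coalescence occurs after the first step). Thus, if $\mathcal F$ contains functions realizing at least two distinct fibre-partitions $\mathcal P_1 \ne \mathcal P_2$, each carrying positive $\mu$-mass, the coalescing classes are not a.s.\ constant and $\mu$ is a non-block measure by Theorem \ref{thm:coal}. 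By (ii) the images $f(S)$ must all be common transversals of $\mathcal P_1$ and $\mathcal P_2$, so the combinatorial heart of the construction is a pair of distinct partitions of $S$ into $\ell$ classes together with a supply of common transversals.

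For consistency with $P_n$ I would impose symmetry. Post-composition $f \mapsto \gamma \circ f$ by a permutation $\gamma$ leaves fibre-partitions unchanged (it only moves images), so if $\mathcal F$ is closed under post-composition by a group $\Gamma \le \mathrm{Sym}(S)$ acting transitively on $S$, and $\mu$ is weighted $\Gamma$-invariantly, then $F(i)$ is uniform on $S$ for every $i$, giving $\mu \in \sL_{P_n}$ at once. To keep condition (ii) under this closure, I would take $\Gamma$ to permute the blocks of each $\mathcal P_a$, so that $\Gamma$ sends common transversals to common transversals. The cleanest realization is group-theoretic: take a group $G$ of order $n$ acting on itself by translation ($\Gamma = G$), let $\mathcal P_1, \mathcal P_2$ be the coset partitions of two distinct subgroups $H_1 \ne H_2$ of order $m = n/\ell$, let the common transversals be the cosets of a subgroup $D$ of order $\ell$ with $D \cap H_1 = D \cap H_2 = \{e\}$, and let $\mathcal F$ consist of the maps collapsing each $H_a$-coset onto a point of a $D$-coset. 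This recovers Example \ref{ex:7} with $G = (\Z/2\Z)^2$, and it works whenever such data exist, e.g.\ $G = \Z/\ell\Z \times \Z/m\Z$ with $\gcd(\ell, m) > 1$.

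The main obstacle is the general factorization $n = \ell m$, and in particular the coprime case $\gcd(\ell, m) = 1$. There the group approach fails: a short imprimitivity argument shows that no transitive subgroup of $\mathrm{Sym}(S)$ can preserve two distinct partitions of $S$ into $\ell$ equal classes of size $m$ (already for $n=6$, $\ell=2$, $m=3$, where the cell-size pattern of the two triple-partitions blocks transitivity), so symmetry alone cannot simultaneously deliver the randomness of the classes and the uniform marginals. I would circumvent this using the freedom that the coalescing classes of a non-block measure need not be of equal size, only $k(\mu) = \ell$ being required, and so choose $\mathcal P_1, \mathcal P_2$ to be partitions into $\ell$ classes of possibly unequal sizes admitting a rich enough family of common transversals to reach every ordered pair $i \to j$. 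The crux is then to verify directly, rather than via a single symmetry group, that the weights can be chosen so that each $i \to j$ carries mass exactly $1/n$; this reduces to a feasibility statement for a linear system on the function family, and establishing it uniformly over all admissible $(\ell, m)$ is where the real work of the proof lies.
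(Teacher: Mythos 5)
Your reduction is exactly the one the paper uses: collapse $S$ to $\ell$ points at the first step via functions whose images are common transversals of every fibre-partition in the support (so no further coalescence can occur), arrange at least two distinct fibre-partitions, and invoke Theorem \ref{thm:coal}. The gap is the step you yourself flag as ``where the real work of the proof lies'': you never produce, for general $\ell \mid n$, a weighted family meeting the consistency constraint $\mu(\{f: f(i)=j\})=1/n$. Your group-theoretic construction covers only those $(\ell,m)$ for which a group of order $n$ contains two distinct subgroups of order $m$ together with a subgroup $D$ of order $\ell$ meeting both trivially, and your own imprimitivity argument shows that the symmetry route is closed when $\gcd(\ell,m)=1$; the fallback you sketch (unequal class sizes plus an unexamined linear-feasibility claim) is not carried out, so the theorem remains unproved already for $n=6$, $\ell=2$.

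The paper closes precisely this gap by abandoning group invariance in favour of a sharply-transitive combinatorial design, and it never needs unequal class sizes. With $b=n/\ell$ blocks $S_1,\dots,S_b$ of size $\ell$, it takes $n$ functions $f_{i,j}$ ($1\le i\le b$, $1\le j\le \ell$): the function $f_{i,j}$ maps every block bijectively onto the single block $S_i$, its restriction to $S_r$ being the cyclic rotation $\rho^{j-1}$ of $S_i$ composed with a permutation $\pi^i_r$ depending only on $(i,r)$. Because the $\ell$ cyclic rotations, evaluated at any fixed position, exhaust $S_i$, for every pair $(u,v)$ there is \emph{exactly one} $f\in\what\sF$ with $f(u)=v$; hence the uniform measure on these $n$ functions lies in $\sL_{P_n}$ automatically---no linear system to solve and no transitive group required. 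Choosing the vectors $\bpi^i=(\pi^i_2,\dots,\pi^i_b)$ distinct for distinct $i$ makes the fibre-partition of $F_1$ (equivalently, the set of coalescing classes) genuinely random, which is your non-block criterion, while all classes remain transversals of the blocks and hence of equal size $m$. So the obstruction you identified is real but applies only to group-invariant families, not to the consistency constraint itself, and the unequal-size detour is unnecessary.
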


Before embarking on the proof of Theorem \ref{thm:6},
we present an illustrative  example.

%\grg{[Why TWO examples that are largely the same?]}

%\begin{example}
%\label{exa:n6b2}
%Let $n=6$, $\ell=3$, and $b=n/\ell=2$.  
%Let $S_1=\{1,2,3\}$ and $S_2=\{4,5,6\}$.  Let $\what\sF\subset \mc{F}_S$ consist of the six functions
%\begin{align*}
%f_{1,1}&=(123123),
%&f_{1,2}&=(231231),
%&f_{1,3}&=(312312),\\
%f_{2,1}&=(456654),
%&f_{2,2}&=(564465),
%&f_{2,3}&=(645546).
%\end{align*}
%We make the following observations.
%\begin{romlist}
%\item
% For each $u,v\in \{1,2,\dots, 6\}$, $f(u)=v$ for exactly one $f\in \what\sF$.  Thus, taking $\mu$ to be uniform on $ \what\sF$,
%  we 
% obtain that $\mu \in \mc{L}_{P_6}$.  
%\item Each $f\in \what\sF$  maps $S_1$ onto either $S_1$ or $S_2$,
%and similarly $f$ maps $S_2$ onto either $S_1$ or $S_2$.  
%Thus no pair of states in $S_1$ will ever coalesce and no pair of states in $S_2$ will ever coalesce.  
%\item Some of the functions (those of the form $f_{1,j}$ to be precise) yield  the following immediate coalescing classes $\{1,4\}$, $\{2,5\}$, $\{3,6\}$.  Some of the functions (those of the form $f_{2,j}$ to be precise) yield the following immediate coalescing classes $\{1,6\}$, $\{2,5\}$, $\{3,4\}$.  Thus, all coalescence will take place on the first step, and depending on which function is chosen first (i.e., which function $F_1$ is) we obtain different 
%coalescing classes.\END
%\end{romlist}

%\end{example}

\begin{example}
\label{exa:n6b3}
Let $n=6$, $\ell=2$, and $b=n/\ell=3$.  
Let $S_1=\{1,2\}$, $S_2=\{3,4\}$, and $S_3=\{5,6\}$.  Let $\what\sF\subset \mc{F}_S$ consist of the six functions
\begin{align*}
f_{1,1}&=(121212),
&f_{1,2}&=(212121),\\
f_{2,1}&=(343443),
&f_{2,2}&=(434334),\\
f_{3,1}&=(566565),
&f_{3,2}&=(655656).
\end{align*}
We make the following observations.
\begin{romlist}
\item
 For each $u,v\in \{1,2,\dots, 6\}$, $f(u)=v$ for exactly one $f\in \what\sF$.  Thus, taking $\mu$ to be uniform on $ \what\sF$,
  we 
 obtain that $\mu \in \mc{L}_{P_6}$.  
\item
Let $ f\in\what\sF$. For each $i$ there exists $j$ such that $f$  maps $S_i$ onto $S_j$.   Thus no pair of states in any $S_i$ will ever coalesce.
\item 
The functions $f_{1,j}$ yield  the immediately coalescing classes $\{1,3,5\}$, $\{2,4,6\}$.  The functions $f_{2,j}$ yield  the immediately coalescing classes $\{1,3,6\}$, $\{2,4,5\}$.  The functions $f_{3,j}$ yield  the immediately coalescing 
classes $\{1,4,6\}$, $\{2,3,5\}$.  All coalescence takes place on the first step, and depending on which function is chosen first (i.e., which function $F_1$ is) we obtain different coalescing classes.\END
\end{romlist}

\end{example}
 
\begin{proof}[Proof of Theorem \ref{thm:6}]
Let $n, \ell$ be as in the statement of the theorem. We will imitate Example 
%s \ref{exa:n6b2} and 
\ref{exa:n6b3}, and the reader may wish to refer back to that example.   Let $b=n/\ell$ and   
\[
S_r=\bigl\{(r-1)\ell+1,(r-1)\ell+2,\dots, r\ell\bigr\}, \qq r=1,2,\dots,b,
\]
 noting that $|S_r|=\ell$ for each $r$.  
 When considering
$S_r$ as a vector rather than as a set, we order its elements in increasing order.

We will construct a set $\what\sF$ comprising $n$ ($=b\ell$)  functions $f_{i,j}$ with
$i \in \{1,2,\dots, b\}$, $j \in \{1,2,\dots, \ell\}$; 
$\mu$ will be the uniform measure on $\what\sF$. Before
giving a formal definition of $\what\sF$, we summarise its main properties as follows:
\begin{romlist}
\item for every $u,v \in \{1,2,\dots, n\}$ there is exactly one $f\in \what\sF$ such that $f(u)=v$, 
\item for each $f\in \what\sF$, 
 there is a block $S_{f}$ such that every block  $S_r$ is mapped by $f$ \emph{onto} $S_{f}$
 (viewed as sets),
 \item the elements of each $S_r$ (viewed as a vector) are mapped to a certain permutation $\pi^r S_f$
 of $S_f$. 
\end{romlist} 

We turn now to a formal definition, beginning with some notation.
Following Definition \ref{def:fn}, each $f_{i,j}$ can be expressed in the form $(x_1x_2\cdots x_n)$, which is to say that $f_{i,j}(u)=x_u$.  It is convenient to break such a vector into
consecutive subsequences of length $\ell$,
and we do this by adding vertical bars; thus, for example, we write $(x_1x_2\cdots x_n)$ as
\[
\big(x_1\cdots x_\ell\big|x_{\ell+1}\cdots x_{2\ell}\big|\cdots \big|x_{(b-1)\ell+1}\cdots x_{b\ell}\big),
\]
and we term the $b$ subsequences  therein \lq image blocks'.
For each  $f_{i,j}$, the image elements $x_1,\dots,x_\ell$ are distinct;
furthermore, when viewed as sets, we have that 
$\{x_{(i-1)\ell+1},\dots, x_{i\ell}\}=\{x_1,\dots,x_\ell\}$ for each $i$, 
so that each subsequent image block  of each $f_{i,j}$ 
is a permutation of the first image block of $f_{i,j}$.  

We explain next the action of the $f_{i,j}$.
Let $\rho$ denote the rotation permutation $i_1i_2\cdots i_\ell\mapsto i_2i_3\cdots i_\ell i_1$.  
We will express each $f_{i,j}$ in terms of a pair $(h_{i,j}, \bpi^i)$ where
\begin{letlist}
\item $h_{i,1} = S_i$ and  $h_{i,j} = \rho^{j-1}S_i$ (with $S_i$ considered as a vector),
\item 
for each $i=1,2,\dots,b$,
there exists a vector $\bpi^i= (\pi^i_2,\pi^i_3,\dots, \pi^i_b)$ of $b-1$ permutations 
(not necessarily distinct) of $\ell$ symbols,
\item
we set 
\begin{equation}\label{eq:defij}
f_{i,j} = \big( h_{i,j} \big| \pi^i_2 h_{i,j} \big|\cdots \big| \pi^i_b h_{i,j}
\big).
\end{equation}
\end{letlist}
We discuss next how the $\bpi^i$ are defined, beginning with the simplest case $i=1$.  

Let each  $\pi^1_j$ be the identity 
permutation, so that, by \eqref{eq:defij},
\begin{align*}
f_{1,1}= \big( h_{1,1} \big| \pi^1_2 h_{1,1} \big|\cdots \big| \pi^1_b h_{1,1}
\big)
=\big(12\cdots \ell\big|12\cdots \ell\big|\cdots \big|12\cdots\ell\big).
\end{align*}
More generally, we let
\begin{equation}\label{new54}
f_{1,j} = \big( h_{1,j} \big| \pi^1_2 h_{1,j} \big|\cdots \big| \pi^1_b h_{1,j}
\big),
\end{equation}
where
\begin{equation*}
h_{1,j} = \pi_m^1 h_{1,j} 
=\big(j(j+1) \cdots (\ell-1)\ell 1 2 \cdots (j-1)\big), \qq m\ge 2.
\end{equation*}

We discuss next the case of $f_{i,j}$ with $i\ge 2$.  
Then  $h_{i,1}=S_i$ and $h_{i,j}=\rho^{(j-1)}S_i$, and it remains to choose 
a suitable vector $\bpi^i$ of permutations.
For $i=2,3,\dots, b$, let $\bpi^i = (\pi^i_2,\dots, \pi^i_b)$ be an ordered set of permutations 
satisfying $(\pi^i_2,\dots, \pi^i_b)\ne (\pi^{i'}_2,\dots, \pi^{i'}_b)$ for each $i'<i$ (in other words, all the ordered sets of permutations will be distinct as ordered sets).  
We can always find distinct $\bs{\pi}^1,\bs{\pi}^2,\dots, \bs{\pi}^b$ since there are $(\ell!)^{b-1}$ distinct such vectors and $(\ell!)^{b-1}\ge 2^{b-1}\ge b$ (here we have used the fact that $b\ge 2$).
The function $f_{i,j}$ is given by \eqref{eq:defij}.

By construction, $\what\sF$ has properties (i)--(iii) above.
Let $\mu$ be the uniform probability measure on $\what{\sF}$.
By (i),
$$
\mu(\{f:f(u)=v\})=\frac 1n, \qquad  u,v \in S,
$$
whence $\mu \in \sL(P_n)$.  

We turn to the issue of coalescence. By (ii)--(iii) above, 
coalescence occurs at the first stage and not subsequently.  
Since there are at least two distinct $\bs{\pi}^i$, the coalescing classes are random (indeed the number of possibilities for the set of coalescing classes is the number of distinct $\bs{\pi}^i$, $i=1,2,\dots, b$).
Since mappings between blocks are surjections, there are exactly $\ell$ coalescing classes 
$C_1,C_2,\dots, C_\ell$, and each such $C_r$ is a transversal of $\sS$.
Hence $k(\mu)=\ell$.
\end{proof}

\begin{remark}
In the above proof, we have required that the $\bpi^i$ be distinct. It suffices 
for the proof that at least two of them are distinct. Suppose, on the contrary,
that $\bs{\pi}^j=\bs{\pi}^1$ for all $j$.   
Then the set of coalescing classes is deterministic, and Theorem \ref{thm:coal} applies.  
In particular since $\bs{\pi}^1$ is the identity permutation we may  see that 
$$
C_j=\{j,\ell+j,\dots, (b-1)\ell+j\}, \q j=1,2,\dots, \ell.
$$
We deduce that $k(\mu)=\ell$, and that $\mu$ is a block measure with blocks $C_1,C_2,\dots, C_\ell$.  
\END\end{remark}

\section{Functions that generate transition matrices}\label{sec:whatF}

In this final section we pose an inverse question. As usual, the state space is 
$S=\{1,2,\dots,n\}$. Recall the set $\sP_S$ of irreducible  transition matrices on $S$ and
the set $\sF_S$ of functions from $S$ to $S$.
Given a subset $\mc{G}\subseteq\mc{F}_S$, let 
\[
\mc{P}(\mc{G})=\{P\in\sP_S: \exists \mu\in \mc{L}_P \text{ with }\supp(\mu)\subseteq \mc{G}\}.
\] 
In other words, $\mc{P}(\mc{G})$ is the set of (irreducible) stochastic matrices that can be obtained by just using functions in $\mc{G}$.

\begin{question}\label{qn:2}
For given $\sG\subseteq\sF_S$, what can be said about the set $\sP(\sG)$?
\end{question}

\begin{example}
\label{ex:fxy}
For distinct $x,y\in S$ let 
\[
f_{x,y}(v)=\begin{cases}
y & \text{ if }v=x,\\
v & \text{ otherwise}.
\end{cases}
\]
This is \lq almost' the identity function (except  that $x\mapsto y$).  
Let $\mc{G}=\{f_{x,y}:x,y\in S, \,  x\ne y\}$, so that $\sG$  contains 
$n(n-1)$ functions.  Let  $\bs{\alpha}=(\alpha_{x,y}:x\ne y)$ be a vector
of non-negative reals 
satisfying 
\[
\sum_{x,y:\,x\ne y}\alpha_{x,y}=1.
\]
 Let $\mu_{\bs{\alpha}}$ be the  probability measure on $\sG$
 that selects $f_{x,y}$ with probability $\alpha_{x,y}$.  
 The corresponding $P=P_{\bs{\alpha}}=(p_{i,j})$ satisfies
\[
p_{i,j}=\mu\bigl(\{f:f(i)=j\}\bigr)=\mu(\{f_{i,j}\})=\alpha_{i,j},\qq i\ne j,
\]
while 
\[p_{i,i}=\mu\bigl(\{f:f(i)=i\}\bigr)=1-\sum_{j:\,j\ne i}\alpha_{i,j}.\]
Therefore, $\mc{P}(\mc{G})$ is the set of stochastic matrices whose off-diagonal 
elements sum to $1$.\END
\end{example}
Some further very special cases admit precise answers to Question \ref{qn:2}.  
Let $\mc{D}_S\subset \mc{P}_S$ denote the set of doubly stochastic matrices, and let
 $\sFp_S\subset \mc{F}_S$ denote the set of permutations of $S$.   

\begin{proposition}\label{prop:DP}
\mbox{\hfil}
\begin{letlist}

\item 
$\mc{P}(\mc{G})=\mc{D}_S$ if and only if $\mc{G}= \sFp_S$.  
\item 
$\mc{P}(\mc{G})=\mc{P}_S$ if and only if $\mc{G}= \mc{F}_S$.

\end{letlist}
\end{proposition}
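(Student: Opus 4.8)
The plan is to handle the two \lq if and only if' statements in parallel, because in each the \lq if' direction is immediate and the \lq only if' direction rests on a single perturbation idea. For (a)$\Leftarrow$: if $\mc{G}=\sFp_S$, any $\mu$ supported on $\mc{G}$ is a mixture of permutations, so its matrix is a convex combination of permutation matrices and hence doubly stochastic by Theorem \ref{birk}; conversely, every irreducible doubly stochastic $P$ is such a convex combination (Theorem \ref{birk} again), realised by the corresponding mixture of permutations, so $\mc{P}(\sFp_S)=\mc{D}_S$. For (b)$\Leftarrow$: if $\mc{G}=\mc{F}_S$ then every $P\in\sP_S$ is realised by its independence coupling $\mind$ (Example \ref{exa:indep_grand}), whose support lies in $\mc{F}_S$, while $\mc{P}(\mc{G})\subseteq\sP_S$ holds by definition; hence $\mc{P}(\mc{F}_S)=\sP_S$. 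So only the forward implications need work.

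The engine for both forward directions is a \emph{support-detection} estimate. Write $R$ for the matrix with all entries $1/n$, and for $g\in\sF_S$ let $M_g$ be the $0$--$1$ matrix with $(M_g)_{i,j}=\indic{g(i)=j}$. Fix a target $g$ and consider $P=(1-\eps)M_g+\eps R$ for small $\eps>0$. This $P$ has all entries strictly positive, so it is irreducible and lies in $\sP_S$; moreover it is doubly stochastic exactly when $g$ is a permutation. If $P\in\mc{P}(\mc{G})$, pick $\mu$ consistent with $P$ and supported on $\mc{G}$. Since $p_{i,g(i)}=(1-\eps)+\eps/n$, a union bound gives
\[
\mu(\{g\})=\mu\bigl(\{h: h(i)=g(i)\ \forall i\}\bigr)\ge 1-\sum_{i\in S}\mu\bigl(h(i)\ne g(i)\bigr)=1-\eps(n-1),
\]
which is strictly positive once $\eps<1/(n-1)$, forcing $g\in\supp(\mu)\subseteq\mc{G}$. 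Applying this with $g$ ranging over permutations (for which $P\in\mc{D}_S=\mc{P}(\mc{G})$) yields $\sFp_S\subseteq\mc{G}$ in case (a); applying it with $g$ ranging over all functions (for which $P\in\sP_S=\mc{P}(\mc{G})$) yields $\mc{F}_S\subseteq\mc{G}$ in case (b), which completes (b) since trivially $\mc{G}\subseteq\mc{F}_S$.

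It remains, for case (a), to establish the reverse inclusion $\mc{G}\subseteq\sFp_S$, i.e.\ that $\mc{G}$ contains no non-permutation. Here I would argue by contradiction: suppose $f\in\mc{G}$ is not a bijection. Using $R\in\mc{D}_S=\mc{P}(\mc{G})$, fix $\nu$ supported on $\mc{G}$ with matrix $R$, and form $\mu_\eps=\eps\delta_f+(1-\eps)\nu$, again supported on $\mc{G}$. Its matrix $\eps M_f+(1-\eps)R$ is strictly positive, hence irreducible and in $\mc{P}(\mc{G})$; but since $f$ is not surjective some column of $M_f$ sums to $0$, so the corresponding column sum of $\eps M_f+(1-\eps)R$ equals $1-\eps<1$. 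Thus this matrix lies in $\mc{P}(\mc{G})\setminus\mc{D}_S$, contradicting $\mc{P}(\mc{G})=\mc{D}_S$, and so $\mc{G}\subseteq\sFp_S$.

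The routine parts are the verifications that the stated convex combinations are stochastic and irreducible, together with the elementary column-sum bookkeeping. The one genuinely delicate point, and the step I would take most care over, is the support-detection estimate: its content is that driving a consistent matrix arbitrarily close to the deterministic matrix $M_g$ forces \emph{every} consistent coupling to place positive mass on $g$ itself, which is precisely what converts the hypothesis on $\mc{P}(\mc{G})$ into membership statements about $\mc{G}$. Everything else is assembled from Theorem \ref{birk} and Example \ref{exa:indep_grand}.
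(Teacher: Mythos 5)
Your proof is correct. For part (b) it is essentially the paper's own argument: the paper verifies the claim by choosing $P_f$ with $p_{i,f(i)}>1-1/n$ for all $i$ and applying exactly your union bound to force $\mu(\{f\})>0$ for every $\mu\in\mc{L}_{P_f}$; your explicit parametrisation $P=(1-\eps)M_g+\eps R$ with $\eps<1/(n-1)$ is the same estimate in slightly different clothing, and the \lq if' half via the independence coupling coincides with the paper's. Where you genuinely add something is part (a): the paper disposes of it with the single sentence that it is \lq a consequence of Theorem \ref{birk}', which really only covers the \lq if' direction (mixtures of permutations give irreducible doubly stochastic matrices, and Birkhoff--von Neumann gives the converse realisation). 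The \lq only if' direction is not a formal consequence of Birkhoff alone, and your two supplementary steps are exactly what is needed: (i) re-using the support-detection estimate on $(1-\eps)M_\pi+\eps R$ (which is needed because $M_\pi$ itself may fail to be irreducible, e.g.\ when $\pi$ is not an $n$-cycle, so one cannot test with $M_\pi$ directly) to get $\sFp_S\subseteq\mc{G}$; and (ii) the column-sum argument on $\eps M_f+(1-\eps)R$, using that a non-bijective $f$ on a finite set is non-surjective, to rule out non-permutations in $\mc{G}$. So your write-up is a complete and correct proof which, for (a), fills a gap the paper leaves to the reader while staying within the paper's toolkit (Birkhoff, the independence coupling, and the perturbation/union-bound device).
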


\begin{proof}
(a) This is a consequence of Theorem \ref{birk}.

(b)
Suppose first that $\mc{G}=
\mc{F}_S$. For $P\in \mc{P}_S$  one can construct its independence coupling (recall Example \ref{exa:indep_grand}).  Therefore $\mc{P}(\mc{G})=\mc{P}_S$.

Suppose conversely that $\sG$ is such that $\mc{P}(\mc{G})=\mc{P}_S$.  
We claim that for every $f\in \mc{F}_S$ there exists $P\in \mc{P}_S$ (hence $P \in \mc{P}(\mc{G})$) such that $f\in \supp(\mu)$ for every $\mu \in \mc{L}_P$.  This implies that $f\in \mc{G}$ and therefore  $\mc{G}= \mc{F}_S$ as required.  

 It remains to verify the above claim.   Let $f=(j_1j_2\dots j_n)\in \mc{F}_S$
 (recall Definition \ref{def:fn}).   
 There exists $P_f=(p_{i,j})\in \mc{P}_S$ such that  $p_{i,j_i}>1-1/n$ for every $i\in S$;
 the remaining terms $p_{i,j}$, $j\ne j_i$, are assumed strictly positive,
 thus implying that $P_f$ is irreducible.  
 For $\mu \in \mc{L}_{P_f}$, we have 
 \[
 \mu\bigl(\{g\in\sF_S:g(i)=j_i\}\bigr)>1-\frac 1n, \qq i=1,2,\dots,n,
 \] 
 whence 
\[
\mu(f) = \mu\left(\bigcap_{i\in S}\{g:g(i)=j_i\}\right)>0.
\]
Therefore, $f\in \supp(\mu)$. 
\end{proof}

We recall the random transition matrix of Remark \ref{rem:Q}, with law $\Leb$.

\begin{question}\label{q:4}
\label{qn:LebG}
For given $\mc{G}\subset \mc{F}_S$, what can be said about $\Leb(\mc{P}(\mc{G}))$?
\end{question}

This section closes with some partial answers to this question.

\emph{Firstly}, in  the proof of Proposition \ref{prop:DP}(b) it is shown that, if $f\notin \mc{G}$, then 
any matrix with $p_{i,f(i)}>1-n^{-1}$ for all $i \in S$ does not belong to $\mc{P}(\mc{G})$.  
It follows that $\Leb(\mc{P}(\mc{G}))<1$ whenever $\mc{G}\ne \mc{F}_S$.  

\emph{Secondly}, in Example \ref{ex:fxy}, we have $\Leb(\mc{P}(\mc{G}))=0$ since $\sG$ 
is the set of stochastic matrices with off-diagonal elements summing to $1$.

\emph{Thirdly}, in the case $|S|=2$ there are just 4 functions, namely
 \[(11), (12), (21), (22).\]  
It is elementary that if $\mc{G}$ is missing two of them then $\Leb(\sP(\mc{G}))=0$.  
The following lemma shows that (when $n=2$) removing a single function one retains positive measure.

\begin{proposition}
If $|S|=2$ and $f\in\mc{F}_S$, then  $\Leb(\mc{P}(\mc{F}_S\setminus\{f\}))=\frac12$.
\end{proposition}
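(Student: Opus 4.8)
The plan is to reduce everything to the two independent off-diagonal entries of $P$ and to exploit their symmetry. Write $S=\{1,2\}$ and set $a=p_{1,2}$, $b=p_{2,1}$, so that $P=\begin{pmatrix} 1-a & a\\ b & 1-b\end{pmatrix}$, and recall that the four elements of $\mc{F}_S$ are $(11),(12),(21),(22)$ (Definition \ref{def:fn}). First I would parametrize $\mc{L}_P$: writing a generic $\mu\in M(\sF_S)$ as $\mu=\alpha_{11}\delta_{(11)}+\alpha_{12}\delta_{(12)}+\alpha_{21}\delta_{(21)}+\alpha_{22}\delta_{(22)}$, the consistency conditions \eqref{eq:4} read $\alpha_{11}+\alpha_{12}=1-a$, $\alpha_{21}+\alpha_{22}=a$, $\alpha_{11}+\alpha_{21}=b$, and $\alpha_{12}+\alpha_{22}=1-b$. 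Since there are four unknowns and three independent equations, these have the one-parameter family of solutions $(\alpha_{11},\alpha_{12},\alpha_{21},\alpha_{22})=(t,\,1-a-t,\,b-t,\,a-b+t)$, and $\mu$ is a genuine probability measure exactly when $t\in[\max\{0,b-a\},\,\min\{1-a,b\}]$.

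Next I would read off the membership condition. For each $f$, we have $P\in\mc{P}(\mc{F}_S\sm\{f\})$ if and only if some consistent $\mu$ has $\alpha_f=0$ with the remaining coordinates nonnegative, i.e.\ if and only if the value of $t$ forced by $\alpha_f=0$ lies in the feasible interval above. Setting each coordinate to zero in turn and simplifying yields clean characterizations: removing $(11)$ forces $t=0$ and is feasible iff $a\ge b$; removing $(22)$ forces $t=b-a$ and is feasible iff $b\ge a$; removing $(12)$ forces $t=1-a$ and is feasible iff $a+b\ge 1$; removing $(21)$ forces $t=b$ and is feasible iff $a+b\le 1$. Throughout, one uses that $\Leb$-a.e.\ $P$ has $a,b\in(0,1)$ (Remark \ref{rem:Q}) and is therefore irreducible, so the side constraint $P\in\mc{P}_S$ in the definition of $\mc{P}(\mc{G})$ holds almost surely.

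Finally I would compute the four $\Leb$-measures from the law of $(a,b)$. Because each row of $P$ has the form $q_{i,j}/Q_i$ with the $q_{i,j}$ i.i.d.\ uniform on $(0,1)$, the pair $a=q_{1,2}/(q_{1,1}+q_{1,2})$ and $b=q_{2,1}/(q_{2,1}+q_{2,2})$ consists of two independent, identically distributed variables with a continuous law, and the exchange $q_{i,1}\leftrightarrow q_{i,2}$ (which preserves the joint distribution) sends each to $1$ minus itself, so each is symmetric about $\tfrac12$. Hence $\Leb(a\ge b)=\Leb(b\ge a)=\tfrac12$, while $a+b$ is symmetric about $1$ with no atom there, giving $\Leb(a+b\ge 1)=\Leb(a+b\le 1)=\tfrac12$. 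In every one of the four cases this yields $\Leb(\mc{P}(\mc{F}_S\sm\{f\}))=\tfrac12$, as claimed.

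I expect no genuine obstacle here; the content is a short linear-algebra computation followed by a symmetry argument. The only point meriting care is the probabilistic input, namely verifying that $a$ and $b$ are i.i.d.\ and symmetric about $\tfrac12$, so that each of the threshold events $\{a\ge b\}$, $\{b\ge a\}$, $\{a+b\ge 1\}$, $\{a+b\le 1\}$ has probability $\tfrac12$, together with the bookkeeping needed to keep the four cases straight.
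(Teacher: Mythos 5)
Your proof is correct and is essentially the same as the paper's: your four feasibility conditions ($a\ge b$, $b\ge a$, $a+b\ge 1$, $a+b\le 1$) coincide, up to $\Leb$-null sets, with the paper's single unified condition $p_{2,v}<p_{1,u'}$ for $f=(uv)$, and both arguments finish with the same observation that one is comparing two independent, identically distributed continuous quantities. The only cosmetic difference is that you solve the consistency system \eqref{2by2} as a one-parameter family and run four explicit cases, whereas the paper proves the set identity $\mc{P}(\mc{F}_S\setminus\{f\})=\mc{P}_{uv}$ by double inclusion with an explicit construction of $\bs{\alpha}$.
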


\begin{proof}
Given a  probability mass function $\bs{\alpha}$ on $\mc{F}_S$, the resulting matrix $P$ is given by 
\begin{equation}
P = \begin{pmatrix}
\alpha_{(11)}+\alpha_{(12)}& \alpha_{(21)}+\alpha_{(22)}\\
\alpha_{(11)}+\alpha_{(21)}& \alpha_{(12)}+\alpha_{(22)}
\end{pmatrix}.\label{2by2}
\end{equation}

Henceforth, fix $f=(uv)$ and write $u'\ne u$ and $v'\ne v$. 
Let $\mc{P}_{uv}$ denote the set of $P\in \sP_S$ for which $p_{2,v}<p_{1,u'}$.
Since $p_{2,v}$ and $p_{1,u'}$ are independent with the same probability density function
(under $\Leb$), we have that $\Leb(\mc{P}_{uv})=\frac12$. We claim that
\begin{equation}\label{eq:n=2}
\mc{P}(\mc{F}_S\setminus \{f\})=\mc{P}_{uv},
\end{equation}
and the proposition will follow immediately. It remains to prove \eqref{eq:n=2}.

Let $P\in\sP(\sF_S\sm\{f\})$. There exists a mass function $\bal$ on $\sF_S$ satisfying $\al_{(uv)}=0$
such that \eqref{2by2} holds; note that $\al_{(ab)}>0$ for $(ab)\ne (uv)$, $\Leb$-a.s., and
we will assume this. By \eqref{2by2},
\begin{equation*}
p_{2,v}=\al_{(u'v)} 
<\alpha_{(u'v)}+\alpha_{(u'v')} =p_{1,u'}.
\end{equation*}
Since $P$ is stochastic, we have also that $p_{2,v'}>p_{1,u}$,
and hence $P\in \mc{P}_{uv}$ as required.  

Conversely, let $P\in \mc{P}_{uv}$, so that $p_{2,v}<p_{1,u'}$.   Set 
\[\alpha_{(uv)}=0, \q \alpha_{(uv')}=p_{1,u}, \quad  \alpha_{(u'v)}=p_{2,v}, \quad \alpha_{(u'v')}=1-(p_{1,u}+p_{2,v}),\]
and note that 
\[1-\al_{(u'v')} =p_{1,u}+p_{2,v}<p_{1,u}+p_{1,u'}=1.
\]
The stochastic matrix $Q=(q_{i,j})$ 
corresponding to $\bs{\alpha}$ (according to \eqref{2by2}) satisfies 
\begin{align*}
q_{1,u}&=\alpha_{(uv)}+\alpha_{(uv')}=\alpha_{(uv')}=p_{1,u},\\
q_{2,v}&=\alpha_{(uv)}+\alpha_{(u'v)}=\alpha_{(u'v)}=p_{2,v},
\end{align*}
whence $Q=P$.  Therefore, $\mc{P}_{uv}\subseteq\mc{P}(\mc{F}_S\setminus \{f\})$ and the proof
of \eqref{eq:n=2} is complete.
\end{proof}

\emph{Finally}, for any $\vep>0$, when $|S|$ is large there exist sets $\mc{G}\subseteq\sF_S$ 
with size satisfying $|\mc{G}|>(1-\vep)|\mc{F}_S|$ for which $\mc{P}(\mc{G})$ 
has $\Leb$-measure zero.  Here are two examples.

\begin{proposition}\label{prop:5}
We have $\Leb(\sP(\sG))=0$ in the following two cases.
\begin{letlist}
\item
Let $i,j\in S$ and $\sG=\sG_{i,j} =\{f\in \mc{F}_S:f(i)\ne j\}$.

\item
Let $i_1, i_2, j_1,j_2\in S$ satisfy $i_1\ne i_2$, and let $\sG$ be the set of all $f\in\sF_S$ such that 
\[
f(i_1)=j_1 \q\text{if and only if}\q f(i_2)=j_2.
\]
\end{letlist}
\end{proposition}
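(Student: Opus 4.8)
The plan is, for each of the two families $\sG$, to extract from the constraint $\supp(\mu)\subseteq\sG$ a single identity relating entries of $P$, using only the consistency relation \eqref{eq:4}, namely $p_{i,j}=\mu(\{f:f(i)=j\})$; I would then argue that this identity defines a $\Leb$-null set of matrices, via the same absolute-continuity observation employed in the proof of Lemma \ref{lem:10}. Since $\sP(\sG)$ is contained in that null set, $\Leb(\sP(\sG))=0$ follows at once. The work is therefore almost entirely in the combinatorial translation; the probabilistic conclusion is routine.

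For part (a), I would take $P\in\sP(\sG_{i,j})$, witnessed by some $\mu\in\sL_P$ with $\supp(\mu)\subseteq\sG_{i,j}=\{f:f(i)\ne j\}$. Every $f\in\supp(\mu)$ then satisfies $f(i)\ne j$, so the event $\{f:f(i)=j\}$ is $\mu$-null, and \eqref{eq:4} forces $p_{i,j}=0$. Under $\Leb$ one has $p_{i,j}=q_{i,j}/Q_i>0$ almost surely (recall Remark \ref{rem:Q}), so $\{P:p_{i,j}=0\}$ is $\Leb$-null, and hence so is $\sP(\sG_{i,j})$.

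For part (b), I would set $A=\{f:f(i_1)=j_1\}$ and $B=\{f:f(i_2)=j_2\}$, so that membership in $\sG$ is precisely the condition $f\in A\iff f\in B$; equivalently $\sG=\sF_S\sm\big((A\sm B)\cup(B\sm A)\big)$. If $P\in\sP(\sG)$ with witnessing $\mu\in\sL_P$, then $\mu(A\sm B)=\mu(B\sm A)=0$, whence
\[
p_{i_1,j_1}=\mu(A)=\mu(A\cap B)=\mu(B)=p_{i_2,j_2}
\]
by \eqref{eq:4}. Thus $\sP(\sG)\subseteq\{P:p_{i_1,j_1}=p_{i_2,j_2}\}$. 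Because $i_1\ne i_2$, the entries $p_{i_1,j_1}$ and $p_{i_2,j_2}$ are determined by the disjoint families $(q_{i_1,\cdot})$ and $(q_{i_2,\cdot})$, hence are independent under $\Leb$, and each has a non-atomic law on $(0,1)$ by an argument analogous to that for the $Z_i$ in Lemma \ref{lem:10}. Two independent non-atomic random variables agree with probability $0$, so $\Leb(p_{i_1,j_1}=p_{i_2,j_2})=0$, and the claim follows.

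The only step that requires a word of care is the non-atomicity of an individual entry $p_{i,j}=q_{i,j}/\sum_l q_{i,l}$: conditionally on $(q_{i,l}:l\ne j)$, it is a strictly increasing continuous function of the independent continuous variable $q_{i,j}$, so $\Leb(p_{i,j}=t)=0$ for every fixed $t$. This is the mild technical point behind both parts; everything else is bookkeeping, and I expect no genuine obstacle beyond keeping the symmetric-difference computation in part (b) straight.
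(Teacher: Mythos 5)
Your proposal is correct and follows essentially the same route as the paper: in each case you use the consistency relation \eqref{eq:4} to show that $\supp(\mu)\subseteq\sG$ forces a constraint on $P$ (namely $p_{i,j}=0$ in part (a), and $p_{i_1,j_1}=p_{i_2,j_2}$ in part (b)), and then observe that this constraint defines a $\Leb$-null set of matrices. The paper's proof is identical in substance, merely terser; your extra care over the symmetric-difference bookkeeping and the non-atomicity and independence of the entries under $\Leb$ fills in details the paper leaves implicit.
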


Let $n=|S|$.
The cardinality of $\mc G$ in part (a) is $|\mc{F}_S|(1-n^{-1})$, and
in part (b) is $|\sF_S|(1-2n^{-1}+\o(n^{-1}))$.

 \begin{proof}
 (a)  For any $\mu$ supported on $\mc{G}_{i,j}$ we have 
$\mu(\{f:f(i)=j\})=0$, whence every $P=(p_{i,j}) \in \mc{P}(\mc{G})$ has $p_{i,j}=0$.
The probability of such $P$ satisfies $\Leb(\mc{P}(\mc{G}))=0$.  

(b)  For any $\mu$ supported on such $\mc{G}$ we have 
\[\mu(\{f:f(i_1)=j_1\})= \mu(\{f:f(i_2)=j_2\}),\]
 so every $P=(p_{i,j}) \in \mc{P}(\mc{G})$ has $p_{i_1,j_1}=p_{i_2,j_2}$.  The claim follows.
 \end{proof}
 
 \begin{example}
Let $n=3$, so that $|\mc{F}_S|=27$. Let $\mc{G}$ be the following set of 15 functions:
\begin{align*}
\mc{G}&=\{(111),(311),(121),(321),\bs{(231)},\\
&\qquad(112),(312),(122),(322),\bs{(232)},\\
&\qquad(113),(313),(123),(323),\bs{(233)}\}.
\end{align*}
Note that $1\mapsto 2$ if and only if $2\mapsto 3$ (the relevant functions  are in bold above) 
so, by Proposition \ref{prop:5}(b), $\mc{P}(\mc{G})$ has zero 
$\Leb$-measure.\END
 \end{example}

\section*{Acknowledgements}
The authors are grateful to Tabet Aoul Alaa Eddine Islem for indicating an error in an 
earlier version of Theorem \ref{thm:7} as published in \cite{ncftp}. They thank
two referees for their useful comments. 

%\bibliography{lumpc}
%\bibliographystyle{amsplain}
\providecommand{\bysame}{\leavevmode\hbox to3em{\hrulefill}\thinspace}
\providecommand{\MR}{\relax\ifhmode\unskip\space\fi MR }
% \MRhref is called by the amsart/book/proc definition of \MR.
\providecommand{\MRhref}[2]{%
  \href{http://www.ams.org/mathscinet-getitem?mr=#1}{#2}
}
\providecommand{\href}[2]{#2}

\end{document}